\theoremstyle{plain}
\newtheorem{theorem}{Theorem}[section]
\newtheorem{lemma}[theorem]{Lemma}
\newtheorem{proposition}[theorem]{Proposition}
\theoremstyle{definition}
\newtheorem{definition}[theorem]{Definition}
\newtheorem{remark}[theorem]{Remark}
\newtheorem{remarks}[theorem]{Remarks}
\numberwithin{equation}{section}
\newcommand\bA{{\mathbb A}}
\newcommand\bF{{\mathbb F}}
\newcommand\bG{{\mathbb G}}
\newcommand\bP{{\mathbb P}}
\newcommand\bQ{{\mathbb Q}}
\newcommand\bZ{{\mathbb Z}}
\newcommand\charact{\operatorname{char}}
\newcommand\id{\operatorname{id}}
\newcommand\lcm{\operatorname{lcm}}
\newcommand\red{\operatorname{red}}
\newcommand\rk{\operatorname{rk}}
\newcommand\GL{\operatorname{GL}}
\newcommand\Hom{\operatorname{Hom}}
\newcommand\Ima{\operatorname{Im}}
\newcommand\Ker{\operatorname{Ker}}
\newcommand\SL{\operatorname{SL}}
\newcommand\Spec{\operatorname{Spec}}
\newcommand\Tr{\operatorname{Tr}}
\title[Homogeneous varieties over finite fields]
{Counting points of homogeneous varieties over finite fields}
\author{Michel Brion and Emmanuel Peyre}
\address{Universit\'e de Grenoble I\\
D\'epartement de Math\'ematiques\\
Institut Fourier, UMR 5582 du CNRS\\
38402 Saint-Martin d'H\`eres Cedex, France}
\email{Michel.Brion@ujf-grenoble.fr}
\email{Emmanuel.Peyre@ujf-grenoble.fr}
\begin{document}
 
\begin{abstract}
Let $X$ be an algebraic variety over a finite field $\bF_q$,
homogeneous under a linear algebraic group. We show that there exists 
an integer $N$ such that for any positive integer $n$ in a fixed residue 
class mod $N$, the number of rational points of $X$ over $\bF_{q^n}$ is 
a polynomial function of $q^n$ with integer coefficients. 
Moreover, the shifted polynomials, where $q^n$ is formally replaced 
with $q^n + 1$, have non-negative coefficients.
\end{abstract}

\maketitle

\section{Introduction and statement of the results}
\label{sec:introduction}

Given an algebraic variety $X$ over a finite field $k = \bF_q$,
one may consider the points of $X$ which are rational over an
arbitrary finite field extension $\bF_{q^n}$. The number of these
points is given by Grothendieck's trace formula,
\begin{equation}\label{eqn:tra}
\vert X(\bF_{q^n}) \vert = 
\sum_{i \geq 0} (-1)^i\Tr \big(F^n,H_c^i(X)\big),
\end{equation}
where $F$ denotes the Frobenius endomorphism of $X_{\bar{k}}$ and
$H^i_c(X)$ stands for the $i$th $\ell$-adic cohomology group of
$X_{\bar{k}}$ with proper supports, $\ell$ being a prime not dividing 
$q$  (see e.g. \cite[Thm.~3.2,p.~86]{De77}). Moreover, 
by celebrated results of Deligne (see \cite{De74, De80}), each
eigenvalue $\alpha$ of $F$ acting on $H_c^i(X)$ is an algebraic
number, and all the complex conjugates of $\alpha$ have absolute value
$q^{\frac{w}{2}}$ for some non-negative integer  $w \leq i$, with
equality if $X$ is smooth and complete. This implies the general
properties of the counting function  
$n \mapsto \vert X(\bF_{q^n}) \vert$ 
predicted by the Weil conjectures.

We shall obtain more specific properties of that function 
under the assumption that $X$ is \emph{homogeneous}, i.e., 
admits an action of an algebraic group $G$ over $k$ such that 
$X(\bar{k})$ is a unique orbit of $G(\bar{k})$; then $X$ is of course
smooth, but possibly non-complete. We begin with a structure result
for these varieties:

\begin{theorem}\label{thm:prod}
Let $X$ be a homogeneous variety over a finite field $k$. Then
\begin{equation}\label{eqn:iso}
X \cong (A \times Y)/\Gamma,
\end{equation}
where $A$ is an abelian $k$-variety, $Y$ is a homogeneous $k$-variety 
under a connected linear algebraic $k$-group $H$, and $\Gamma$ is a
finite commutative $k$-group scheme which acts faithfully on $A$ by 
translations, and acts faithfully on $Y$ by automorphisms commuting 
with the action of $H$.

Moreover, $A$, $Y$ and $\Gamma$ are unique up to compatible
isomorphisms, $Y/\Gamma$ is a homogeneous $k$-variety under $H$, and
there is a canonical isomorphism
\begin{equation}\label{eqn:ten}
H^*_c(X) \cong H^*(A) \otimes H^*_c(Y/\Gamma).
\end{equation}

In particular,
\begin{equation}\label{eqn:pro}
\vert X(\bF_{q^n}) \vert = 
\vert A(\bF_{q^n}) \vert \; \; \vert (Y/\Gamma)(\bF_{q^n}) \vert.
\end{equation}
\end{theorem}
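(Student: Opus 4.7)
The plan is to construct the decomposition via the Albanese morphism of $X$, and then deduce the cohomological factorization and the point-count formula from it.

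\textbf{Step 1 (Chevalley and Albanese).} After passing to $\bar k$, one has a presentation $X_{\bar k} = G/K$ with $G$ a connected algebraic group. Chevalley's structure theorem (applicable since $k = \bF_q$ is perfect) provides a canonical exact sequence $1 \to H \to G \to B \to 1$ with $H$ connected linear and $B$ an abelian variety, and this decomposition descends to $k$. Letting $\pi : G \to B$, the morphism $X \to B/\overline{\pi(K)}$ realises the Albanese map $\alpha: X \to A$, where $A$ is an abelian $k$-variety; it is surjective, and its geometric fibres are all isomorphic to a single homogeneous $H$-variety $Y$.

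\textbf{Step 2 (Extracting $\Gamma$).} To produce the product decomposition I would take the smallest isogeny $A' \to A$ that trivialises $\alpha$: after this base change, $\alpha$ becomes a trivial bundle $A' \times Y \to A'$, and the Galois group of $A' \to A$ is a finite commutative $k$-group scheme $\Gamma$ acting diagonally on $A' \times Y$. Minimality of the isogeny forces $\Gamma$ to act faithfully on $A'$ by translations; faithfulness of the induced action on $Y$ follows from freeness of the diagonal action, and the commutation with $H$ from the $H$-equivariance of the original fibration. Renaming $A'$ as $A$ yields $X \cong (A \times Y)/\Gamma$. Uniqueness up to compatible isomorphism then follows from the universal properties of the Albanese variety and of Chevalley's decomposition.

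\textbf{Step 3 (Cohomology and point count).} Since $\Gamma$ acts freely on $A \times Y$ (its translation action on $A$ has no fixed points), the quotient map $A \times Y \to X$ is an \'etale Galois cover with group $\Gamma$, giving $H^*_c(X) \cong (H^*(A) \otimes H^*_c(Y))^\Gamma$ by K\"unneth and \'etale descent. Because translations act trivially on the $\ell$-adic cohomology of an abelian variety, $\Gamma$ acts trivially on $H^*(A)$; thus the invariants split as $H^*(A) \otimes H^*_c(Y)^\Gamma \cong H^*(A) \otimes H^*_c(Y/\Gamma)$, which is \eqref{eqn:ten}. Since this decomposition is Frobenius-equivariant, applying the trace formula \eqref{eqn:tra} and factoring traces across the tensor product yields the multiplicative identity \eqref{eqn:pro}.

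\textbf{Main obstacle.} The delicate part is Step 2: identifying the correct finite group scheme $\Gamma$ and verifying simultaneously that it acts faithfully on both factors, that its action on $Y$ commutes with $H$, and that the whole construction is defined over $k$. Over $\bar k$ these are classical facts about quotient presentations $G/K$, but over $k = \bF_q$ the possible absence of an $X(k)$-point means the descent of the decomposition must be handled by a Galois-cohomology argument rather than by an immediate appeal to Lang's theorem.
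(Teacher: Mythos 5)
Your overall architecture (Albanese-type fibration with homogeneous fibres, a finite commutative group scheme $\Gamma$ realised as the Galois group of an isogeny, then K\"unneth plus $\Gamma$-invariants) matches the shape of the paper's argument, but the crucial existence step is missing. Chevalley's theorem only gives an extension $1 \to H \to G \to B \to 1$; it does not produce an abelian \emph{subvariety} of $G$, and in general no isogeny $A' \to A$ trivialises the fibration $\alpha : X \to A$. The paper's remarks after Lemma \ref{lem:fin} give an explicit counterexample: an anti-affine extension $G$ of an elliptic curve by $\bG_m$ classified by a line bundle of infinite order, for which no decomposition (\ref{eqn:iso}) exists. What makes everything work over $k = \bF_q$ is Arima's theorem: $G = AH$ with $A$ the largest abelian subvariety and $H$ the largest connected linear subgroup, the two centralizing each other, so $G \cong (A \times H)/(A \cap H)$. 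From this product structure, together with a $k$-point $x \in X(k)$ supplied by Lang's theorem (which also disposes of your worry about rational points), one sets $K$ equal to the reduced neutral component of $G_x$, $Y = H/K$, $\Gamma = G_x/K$, and the decomposition falls out. Your proposal never uses the finiteness of $k$ at the point where it is actually needed, so the ``smallest isogeny trivialising $\alpha$'' may simply not exist under the hypotheses you state. A smaller slip in the same step: faithfulness of $\Gamma$ on $Y$ does not follow from freeness of the diagonal action (that is automatic once $\Gamma$ acts freely on $A$); one must check that the kernel of the action on $Y$ embeds in $A$ and acts trivially on $X$, hence is trivial because $A$ acts faithfully.

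In Step 3 you treat $A \times Y \to X$ as an \'etale Galois cover, but $\Gamma$ is a finite group \emph{scheme} which may be non-reduced in characteristic $p$ (e.g.\ contain $\mu_p$ or $\alpha_p$); then the quotient map is not \'etale and \'etale descent does not directly give $H^*_c(X) \cong H^*_c(A \times Y)^{\Gamma}$. The paper first proves the formula for $\Gamma$ reduced --- where the invariance statement (Lemma \ref{lem:fin}(i)) holds for any finite group quotient, free or not, by analysing $(f_*\bQ_{\ell})^{\Gamma}$ --- and then reduces the general case to this one via the finite bijective morphism $(A \times Y)/\Gamma_{\red} \to (A \times Y)/\Gamma$, which induces an isomorphism on $H^*_c$ by Lemma \ref{lem:fin}(ii). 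Finally, the uniqueness claim needs more than the universal property of the Albanese variety: the paper first shows, via the rigidity lemma and the Nishi--Matsumura theorem, that $A$ is the largest abelian variety acting faithfully on $X$, and only then identifies $X \to A/\Gamma$ as the Albanese morphism to pin down $\Gamma$ and $Y$.
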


Theorem \ref{thm:prod} is deduced in Section \ref{sec:prod} from a
structure result for algebraic groups over finite fields, due to Arima 
(see \cite{Ar60}). 

In view of (\ref{eqn:pro}) and the known results on the counting
function of abelian varieties, we may concentrate on homogeneous
varieties under linear algebraic groups. For these, we obtain:

\begin{theorem}\label{thm:pepo}
Let $X$ be a variety over $\bF_q$, homogeneous under a linear
algebraic group. Then $\vert X(\bF_{q^n}) \vert$
is a periodic polynomial fonction of $q^n$ with integer coefficients.
\end{theorem}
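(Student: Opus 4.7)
The plan is to apply Grothendieck's trace formula (\ref{eqn:tra}) and analyse the Frobenius eigenvalues on the $\ell$-adic cohomology groups $H^i_c(X)$. The main task is to show that all such eigenvalues are of the Tate form $\zeta\, q^m$, with $\zeta$ a root of unity and $m \in \bZ_{\geq 0}$; granted this, the periodic polynomial structure with integer coefficients follows by grouping eigenvalues according to residues modulo a suitable $N$.

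\emph{Reduction to $X = H/K$.} First I would apply Theorem \ref{thm:prod} to write $X \cong (A \times Y)/\Gamma$. Because any morphism from an affine variety to an abelian variety is constant, the composition $G \times X \to X \to A/\Gamma$ is constant on each $G$-orbit, and the $G$-transitivity then forces the morphism $X \to A/\Gamma$ to be constant; since it is surjective, $A/\Gamma$ (and hence, by faithfulness of the $\Gamma$-action on $A$, both $A$ and $\Gamma$) must be trivial. So $X = Y$ is already homogeneous under a connected linear algebraic group $H$ over $\bF_q$, and the Lang--Steinberg theorem yields $X(\bF_q) \neq \emptyset$, so $X \cong H/K$ for a closed $\bF_q$-subgroup scheme $K$ of $H$.

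\emph{Tate-type eigenvalues: the main obstacle.} The heart of the proof is the claim that every Frobenius eigenvalue on $H^i_c(X)$ has the form $\zeta\, q^m$ with $\zeta$ a root of unity and $m \in \bZ_{\geq 0}$. I would proceed by successive reductions. The étale cover $H/K^0 \to H/K$ handles the component group of $K$. Next, the Leray spectral sequence for the smooth projection $H/K^0 \to H/(K^0 \cdot R_u(H))$, whose geometric fibres are quotients of the unipotent radical of $H$ and hence $\bar{k}$-isomorphic to affine spaces with purely Tate cohomology, reduces to the case $H$ reductive. Finally, over a finite Galois extension $\bF_{q^d}/\bF_q$ splitting $H$, a Bruhat-type stratification of $X_{\bF_{q^d}}$ by iterated affine bundles produces eigenvalues that are integer powers of $q^d$. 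Descent to $\bF_q$, using that the $\bF_q$-Frobenius is a $d$th root of the $\bF_{q^d}$-Frobenius on each cohomology group, introduces at most a $d$th root of unity into each eigenvalue.

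\emph{Gathering and integrality.} Granting the claim, let $N$ be a common multiple of the orders of all the roots of unity $\zeta_{i,j}$ that appear. For $n$ congruent to $r$ modulo $N$, the trace formula reads
\[
\vert X(\bF_{q^n})\vert \;=\; \sum_{m \geq 0} c_m(r)\, (q^n)^m, \qquad c_m(r) \;=\; \sum_{i,j:\, m_{i,j} = m}(-1)^i\,\zeta_{i,j}^{\,r},
\]
exhibiting $\vert X(\bF_{q^n})\vert$ as a polynomial in $q^n$ on each residue class modulo $N$. Integrality of the coefficients $c_m(r)$ follows from the standard fact that the characteristic polynomial of Frobenius on each $H^i_c(X)$ lies in $\bZ[T]$, combined with Deligne's weight theorem: Galois conjugation preserves the weight of a Weil number, so the weight-$2m$ factor of this polynomial lies in $\bZ[T]$. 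After the change of variable $T \mapsto q^m T$, one obtains a monic polynomial in $\bZ[T]$ whose roots are the $\zeta_{i,j}$ of weight $2m$; its $r$th Newton power sum $\sum_j \zeta_{i,j}^{\,r}$ is thus a rational integer, yielding $c_m(r) \in \bZ$.
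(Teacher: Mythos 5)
Your overall strategy coincides with the paper's: prove that every Frobenius eigenvalue on $H^*_c(X)$ has the form $\zeta\,q^m$ with $\zeta$ a root of unity (Proposition \ref{prop:eig}) and then regroup the trace formula according to residues mod $N$. Your preliminary reductions (the detour through Theorem \ref{thm:prod} is unnecessary, since homogeneity under a linear group plus Lang's theorem gives $X \cong G/G_x$ directly, but it is harmless) and the first two steps of the eigenvalue analysis (quotient by the component group of $K$, fibration over $H/K^0R_u(H)$ with affine-space fibres) are fine and match Steps 1--2 of the paper. The genuine gap is the next sentence: once $H$ is reductive you invoke ``a Bruhat-type stratification of $X$ by iterated affine bundles.'' No such stratification exists for a general homogeneous space $H/K$ of a split reductive group: the Bruhat decomposition stratifies $H$ itself (for the $B\times B$-action) or $H/P$ with $P$ parabolic, and it descends to $H/K$ only after $K^0$ has been arranged to lie in a maximal torus $T\subset B$, in which case $Bn_wB/K \cong U^w\times U\times T/K$. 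For $K$ a semisimple or otherwise non-toral reductive subgroup nothing of the sort is available; indeed the paper notes (via Saltman) that some of these $H/K$ are not even rational over $\bar k$, so no cell-like decomposition can exist. Bridging exactly this gap is the substance of the paper's argument: either Proposition \ref{prop:tors} (weak purity descends along torsors under connected linear groups, proved with a Borel subgroup, the degeneration of the flag-bundle spectral sequence, and the Gysin sequence for $\bG_m$-torsors, applied to $H\to\Spec(k)$ and $H\to H/K^0$), or, in the variant of Sections \ref{sec:fact}--\ref{sec:elem}, the further reductions by Borel--Tits (a non-reductive $K$ lies in a parabolic) and by Steinberg's theorem on the variety of maximal tori (replace $K$ by $N_K(T_K)$, so that $K^0$ becomes a torus). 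Without one of these ingredients your proof does not close.

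A secondary issue is the integrality step: you appeal to the ``standard fact'' that the characteristic polynomial of $F$ on each $H^i_c(X)$ lies in $\bZ[T]$. For compactly supported cohomology of a non-proper variety this is not standard (it is bound up with independence of $\ell$), and the weight-by-weight Galois argument you build on it is therefore on shaky ground. The paper sidesteps this entirely: once the coefficients $c_m(r)$ are known to be sums of roots of unity, hence algebraic integers, the elementary observation that a Laurent polynomial $P\in\bar{\bZ}[t,t^{-1}]$ with $P(q^n)\in\bZ$ for infinitely many $n$ must lie in $\bZ[t]$ (solve a Vandermonde system to get rationality, then use that rational algebraic integers are integers) already yields integer coefficients. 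I would replace your Galois/Newton-sum argument by this.
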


By this, we mean that there exist a positive integer $N$ and 
polynomials $P_0(t),\ldots,P_{N - 1}(t)$ in $\bZ [t]$
such that
\begin{equation}\label{eqn:pepo}
\vert X(\bF_{q^n}) \vert = P_r(q^n) \quad \text{whenever}
\quad n \equiv r \quad (\text{mod}~N).
\end{equation}
We then say that $N$ is a \emph{period} of the function 
$q^n \mapsto \vert X(\bF_{q^n}) \vert$.

Notice that $\vert X(\bF_{q^n}) \vert$ is generally not a polynomial
function of $q^n$. For example, if $\charact(k)\neq 2$, then the affine
conic $X \subset \bA^2_k$ with equation $x^2 -a y^2 = b$ is
homogeneous under the corresponding orthogonal group and satisfies
$\vert X(\bF_{q^n}) \vert = q^n - \varepsilon$, where  
$\varepsilon = 1$ if $a$ is a square in $\bF_{q^n}$, and
$\varepsilon = - 1$ otherwise.

Theorem \ref{thm:pepo} is proved in Section \ref{sec:pepo},
by showing that each eigenvalue of $F$ acting on $H^*_c(X)$ is the
product of a non-negative integer power of $q$ with a root of unity
(Proposition \ref{prop:eig}). As a consequence, there exists a 
unique polynomial $P_X(t) \in \bZ[t]$ such that
\begin{equation}\label{eqn:pol}
P_X(q^n) = \vert X(\bF_{q^n}) \vert
\end{equation}
for any sufficiently divisible, positive integer $n$. 
Our third result yields a factorization of that polynomial:

\begin{theorem}\label{thm:fact}
Let $X$ be a variety over $\bF_q$, homogeneous under a linear
algebraic group, and let $P_X(t)$ be the polynomial satisfying
(\ref{eqn:pol}). Then there exists a non-negative integer $r$ such that
\begin{equation}\label{eqn:fac}
P_X(t) = (t-1)^r Q_X(t),
\end{equation}
where $Q_X(t)$ is a polynomial with non-negative integer coefficients.
\end{theorem}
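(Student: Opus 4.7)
The strategy is to reduce, via a sequence of fibrations, to the case $X = G/H$ with both $G$ and $H$ connected reductive, and then to express $P_X(t)$ explicitly in terms of Weyl-group invariants so that the desired factorization can be read off.

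First, decomposing $X$ into $G^0$-orbits permuted by the finite group $G/G^0$ and grouping these into $\bF_q$-rational orbits (which depends on the residue of $n$ modulo an appropriate period $N$) reduces the question to $X = G/H$ with $G$ connected linear algebraic. Using Lang's theorem---which says that a connected unipotent group over $\bF_q$ has trivial first Galois cohomology---the fibrations induced by the Levi decompositions $G = R_u(G) \rtimes L_G$ and $H = R_u(H) \rtimes L_H$ are locally trivial on $\bF_{q^n}$-points. Each such fibration multiplies $P_X$ by a monomial $t^c$ with $c \geq 0$, which preserves the desired factorization. One may therefore assume that both $G$ and $H$ are connected reductive.

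In this reductive setting, Lang's theorem applied to the principal $H$-bundle $G \to G/H$ yields $\vert G(\bF_{q^n})\vert = \vert H(\bF_{q^n})\vert \cdot \vert X(\bF_{q^n})\vert$, and hence $P_X = P_G/P_H$. After enlarging $N$ so that both groups are split over $\bF_{q^n}$ for $n$ in the chosen residue class, the classical point-count formula for split connected reductive groups gives
\[
P_G(t) = t^{\vert \Phi^+_G\vert} \, (t-1)^{\rk G} \prod_i \phi_{d_i^G}(t),
\]
where $\phi_d(t) := 1 + t + \cdots + t^{d-1}$ and the $d_i^G$ are the degrees of the fundamental invariants of the Weyl group of $G$; an analogous formula holds for $P_H$. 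Dividing gives
\[
P_X(t) = t^{\vert \Phi^+_G\vert - \vert \Phi^+_H\vert} \cdot (t-1)^{\rk G - \rk H} \cdot R(t), \qquad R(t) := \frac{\prod_i \phi_{d_i^G}(t)}{\prod_j \phi_{d_j^H}(t)}.
\]
Both exponents are non-negative integers, and the candidate factorization is $P_X(t) = (t-1)^r Q_X(t)$ with $r := \rk G - \rk H$ and $Q_X(t) := t^{\vert \Phi^+_G\vert - \vert \Phi^+_H\vert} R(t)$.

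The main obstacle is to prove that $R(t)$---which lies in $\bZ[t]$ by Theorem~\ref{thm:pepo}---has non-negative coefficients. This is non-trivial because the cyclotomic polynomials appearing in the factorization of the $\phi_{d_i}$ can themselves have negative coefficients (for instance $\Phi_6(t) = t^2 - t + 1$ divides $\phi_6$), so the positivity of $R(t)$ cannot be read off term-wise; it must follow from systematic cancellation between numerator and denominator, reflecting the fact that $H$ is a genuine subgroup of $G$. I would attempt a proof either combinatorially, by induction on $\rk G$ together with a careful analysis of which cyclotomic factors appear in $\prod \phi_{d_i^G}$ versus $\prod \phi_{d_j^H}$, or geometrically, by realizing $R(t)$ as the Poincar\'e-type polynomial of an auxiliary homogeneous variety (such as a partial flag variety or a smooth equivariant completion of $G/H$) admitting a paving by affine cells over an algebraic closure. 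Establishing this positivity is the technical heart of the proof; once it is in hand, the theorem follows at once.
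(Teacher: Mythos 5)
The proposal has genuine gaps, the most serious being the claim that one may reduce to the case where $H$ is \emph{connected}. The stabilizer of a point of a homogeneous variety is in general disconnected, and the quotient $G/H^0 \to G/H$ by the finite group $\Gamma = H/H^0$ changes the counting function in an essential way: for $G = \SL(2)$ and $H = N_G(T)$ the normalizer of a maximal torus, one has $\vert (G/H)(\bF_{q^n})\vert = q^{2n}$ while $\vert (G/H^0)(\bF_{q^n})\vert = q^n(q^n+1)$ (this is exactly the example in Remark 2.3(ii) of the paper), and $q^{2n}$ is \emph{not} equal to $\vert G(\bF_{q^n})\vert / \vert H(\bF_{q^n})\vert$ because Lang's theorem, i.e. the vanishing of $H^1(\bF_q, H)$, fails for disconnected $H$. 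So your identity $P_X = P_G/P_H$ and the resulting ratio $R(t)$ of products of $\phi_{d_i}$ simply do not cover the general case. Handling $\Gamma$ is precisely the hard part of the paper's argument: after reducing (via Borel--Tits and Steinberg's count of maximal tori) to $G$ reductive and $H^0$ a torus but $H$ possibly disconnected, the paper either produces a subtorus $S \subset T$ with $T = S\cdot w(H^0)$ for all Weyl elements $w$ and shows that $S\backslash X$ is strongly pure (Lemma 4.1), giving $P_X(t) = (t-1)^r P_{S\backslash X}(t)$, or, in the elementary version, studies the invariant ring $S^{\Gamma}$ of the symmetric algebra of the character lattice of $H^0$ as a Cohen--Macaulay module over $R^W$ and reads off positivity from a Hilbert series.

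Two further problems. First, Levi decompositions $H = R_u(H)\rtimes L_H$ need not exist in positive characteristic; the introduction states explicitly that this is why the argument of \cite{BP02} cannot be transported. The correct substitute (Step 3 of Section 4) is the Borel--Tits theorem: a non-reductive $H$ inside reductive $G$ lies in a proper parabolic $P$, the Bruhat decomposition yields $P_{G/H} = P_{G/P}\,P_{P/H}$, and one then passes to $P/R_u(P)$. Second, even in the split connected reductive case that your reduction does reach, the non-negativity of the coefficients of $R(t)$ --- which you correctly identify as the technical heart --- is left entirely unproven; you only sketch two possible strategies. The paper's proof supplies this missing positivity through strong purity of the quotient $S\backslash X$ (respectively, freeness of $R\otimes_{R^W}S^{\Gamma}$ over a polynomial subring), not through any term-by-term analysis of cyclotomic factors.
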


This result follows from \cite[Thm.~1]{BP02} when $X$ is obtained from
a complex homogeneous variety by reduction modulo a large prime. 
However, certain homogeneous varieties over finite fields do not 
admit any lift to varieties in characteristic zero (see \cite{LR97}
for specific examples). Also, the approach of \cite{BP02} relies on
the existence of Levi subgroups, which fails in our setting,
and on arguments of equivariant cohomology which would require
non-trivial modifications.

We present a proof of Theorem \ref{thm:fact} in Section
\ref{sec:fact}; it combines the reduction steps of Section
\ref{sec:pepo} with a result adapted from \cite{BP02} in a simplified
form (Lemma \ref{lem:tor}, the only ingredient which relies 
on methods of $\ell$-adic cohomology). 

In Section \ref{sec:elem}, we show how to replace this ingredient with 
arguments of invariant theory, along the lines of classical results of
Steinberg (see \cite[\S 14]{St68}). This yields elementary proofs of
Theorems \ref{thm:pepo} and \ref{thm:fact}, and also of our most
surprising result:

\begin{theorem}\label{thm:pos}
Let $X$ be a variety over $\bF_q$, homogeneous under a linear
algebraic group, and let $P_0(t), \ldots, P_{N-1}(t)$ be the 
polynomials satisfying (\ref{eqn:pepo}). Then the shifted polynomials 
$P_0(t + 1), \ldots, P_{N-1}(t + 1)$ have non-negative coefficients. 
\end{theorem}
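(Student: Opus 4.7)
The plan is to combine the reductions from the proofs of Theorems \ref{thm:pepo} and \ref{thm:fact} with an invariant-theoretic analysis in the spirit of Steinberg \cite[\S 14]{St68}, carried out separately for each residue class.

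First I would reduce to $X = G/H$ with $G$ a connected linear algebraic group over $\bF_q$ and $H$ a closed subgroup, as in the earlier theorems. Passing to $G/U$ for the unipotent radical $U$ of $G$ multiplies the counting function by a factor of the form $q^{na}$; since multiplication by $(t+1)^a$ preserves non-negativity of coefficients, this further reduces the problem to $G$ reductive. For such $G$, Steinberg's order formula gives
\begin{equation*}
|G(\bF_{q^n})| = q^{nN}\prod_{i=1}^{\rk G} \bigl(q^{nd_i} - \epsilon_{G,i}^n\bigr),
\end{equation*}
where $N = |\Phi^+|$, the $d_i$ are the degrees of fundamental invariants of the Weyl group $W$, and the $\epsilon_{G,i}$ are roots of unity encoding the Frobenius action on these invariants. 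Lang's theorem identifies $|X(\bF_{q^n})|$ with $|G(\bF_{q^n})|/|H^0(\bF_{q^n})|$, modulo a combinatorial correction coming from $\pi_0(H)$, and the analogous formula for $H^0$ then presents $P_r(t)$ as a rational expression in the factors $\bigl(t^{d_i} - \epsilon_i^r\bigr)$.

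The core of the argument --- and the main obstacle --- is to show that $P_r(t+1)$ has non-negative coefficients, equivalently that $P_r(t)$ expands non-negatively in powers of $(t-1)$. The strategy is to group the factors $\bigl(t^{d_i} - \epsilon_i^r\bigr)$ according to Galois orbits of the roots of unity $\epsilon_i^r$ and then verify, after the cancellations forced by the quotient by $|H^0(\bF_{q^n})|$, that each orbit contributes non-negatively to the $(t-1)$-adic expansion. This refines the invariant-theoretic order formula of Steinberg to track the residue-class twist by $r$. The principal technical difficulty is handling the combined Galois action when $\epsilon_i^r$ is a non-trivial root of unity, and carrying through the necessary cancellations when $H$ is disconnected; ultimately one expects the coefficients $a_{r,k}$ in the $(t-1)$-adic expansion to admit a geometric interpretation as counts of Frobenius-fixed cells in a stratification of $X$ adapted to the residue class $r$, which would make non-negativity manifest.
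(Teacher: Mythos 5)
Your setup---reduce to $G$ connected reductive, invoke Steinberg's order formula, and express $|X(\bF_{q^n})|$ for $n\equiv r$ as a quotient of products of factors $t^{d}-\epsilon^{r}$ evaluated at $t=q^n$---is exactly the paper's starting point (the disconnected case being absorbed by averaging the formula (\ref{eqn:co0}) over $\gamma\in H/H^0$, each summand being the counting function of $G/(g_\gamma H^0 g_\gamma^{-1})$ with \emph{connected} stabilizer; non-negativity of shifted coefficients survives such averaging). But the proposal stops precisely at the decisive step: you label the positivity of the $(t-1)$-adic expansion the ``principal technical difficulty,'' propose an unexecuted Galois-orbit bookkeeping, and fall back on the hope of a geometric interpretation via Frobenius-fixed cells. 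That last hope is known to be obstructed: as the introduction notes, these homogeneous spaces generally admit no decomposition into locally closed tori (by Saltman's non-rationality examples), and the combinatorial meaning of the shifted coefficients is left as an open question. So as written this is a plan with a hole where the proof should be.

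The missing idea is elementary and requires no orbit-by-orbit analysis. Fix $N$ so that all roots of unity $\epsilon_i,\xi_j$ occurring in the order formulas for $G$ and $H^0$ satisfy $\epsilon^N=1$; then for $n\equiv r\ (\mathrm{mod}\ N)$ one gets the identity of rational functions $P_r(t)\,\prod_j(t^{e_j}-\xi_j^{\,r})=t^{a}\prod_i(t^{d_i}-\epsilon_i^{\,r})$, since both sides agree at the infinitely many values $t=q^n$. Hence every root of $P_r$ is either $0$ or a root of unity. As $P_r$ has real coefficients and positive leading coefficient, its roots split into $0$'s, $\pm1$'s, and conjugate pairs $\{\zeta,\bar\zeta\}$ with $|\zeta|=1$; therefore $P_r(t+1)$ is a product of the factors $t+1$, $t$, $t+2$ and $(t+1-\zeta)(t+1-\bar\zeta)=t^2+(2-2\Re\zeta)\,t+|1-\zeta|^2$, each of which has non-negative coefficients because $\Re\zeta\le 1$. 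Multiplying out gives the theorem. This is the whole content of the paper's final step; supplying it would turn your outline into a complete proof.
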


A similar positivity result has been conjectured by Mozgovoy and Reineke 
in the setting of quiver moduli (see \cite[Rem.~6.5]{MR07}
and also \cite[Conj.~8.5]{Re08}). They also observed that the
existence of a decomposition of the considered moduli spaces into
locally closed tori would yield a geometric explanation for 
their positivity property. Note that such a decomposition generally
does not exist in the setting of homogenenous varieties under linear
algebraic groups, since some of these varieties are not rational over
$\bar{k}$ (this follows from results of Saltman, see 
\cite{Sa84a,Sa84b}). This raises the question of finding a (geometric 
or combinatorial) interpretation of the coefficients of our shifted 
polynomials.

\medskip

\noindent
{\bf Acknowledgements.} We thank J.-P.~Serre and D.~Timashev for their
interest in our results, and for useful suggestions. Also, we thank the
referee for his careful reading and valuable comments.

\medskip

\noindent
{\bf Notation and conventions.} Throughout this article, we fix a
finite field $k$ of characteristic $p$, with $q$ elements. Also, we
fix an algebraic closure $\bar{k}$ of $k$. For any positive integer
$n$, we denote by $\bF_{q^n}$ the unique subfield of $\bar{k}$
with $q^n$ elements; in particular, $k = \bF_q$.

By a \emph{variety}, we mean a geometrically integral, separated
scheme of finite type over $k$; morphisms (resp.~products) of
varieties are understood to be over $k$. An \emph{algebraic group} $G$
is a smooth group scheme of finite type over $k$; then each connected
component of $G$ is a variety. The identity element of $G$ is denoted
by $e_G$. Notice that every algebraic subgroup of $G$ is 
``defined over $k$'' with our conventions.

For any variety $X$, we set 
$$
X_{\bF_{q^n}} := X \times_k \bF_{q^n}, \quad
X_{\bar{k}} := X \times_k \bar{k},
$$
and we denote by $F$ the Frobenius endomorphism of $X_{\bar{k}}$. 

Given a prime number $\ell \neq p$, we set for simplicity 
$$
H^i(X) := H^i(X_{\bar{k}} ; \bQ_{\ell}),
$$
the $i$th $\ell$-adic cohomology group of $X_{\bar{k}}$. Our notation
for cohomology with proper supports is
$$
H^i_c(X) := H^i_c(X_{\bar{k}} ; \bQ_{\ell}).
$$
We shall use \cite{DG70,Sp98} as general references for
algebraic groups, and \cite{De77, Mi80} for \'etale cohomology.

\section{Proof of Theorem \ref{thm:prod}}
\label{sec:prod}

We may choose a connected algebraic group $G$ such that $X$ is 
homogeneous under $G$. By \cite[Thm.~1]{Ar60} (see also 
\cite[Thm.~4]{Ro61}), we have $G = A H$, where $A$ is the
largest abelian subvariety of $G$, and $H$ is the largest connected
linear algebraic subgroup of $G$; moreover, $A$ and $H$ centralize each 
other. So $G \cong (A \times H)/ (A \cap H)$, and we may assume that 
$$
G = A \times H. 
$$
Replacing $A$ and $H$ with quotient groups, we may also assume that
they both act faithfully on $X$.

Let $G_X$ denote the kernel of the $G$-action on $X$. Then $G_X$ is 
isomorphic to a subgroup of $A$ (via the first projection) and also to
a subgroup of $H$. Since $A$ is complete and $H$ is affine, it follows
that $G_X$ is finite.

Also, $X$ contains a $k$-rational point $x$ by Lang's theorem
(see \cite[Thm.~2]{La56}). Denote by $G_x$ its isotropy
subgroup-scheme; then $G_x$ is linear by the finiteness of $G_X$
together with \cite[Lem.~p.~154]{Ma63}. 
In particular, the reduced neutral component $K$ of $G_x$ (a closed
normal subgroup of $G_x$) is contained in $H$. Let $\Gamma := G_x/K$;
then $\Gamma$ is a finite group scheme acting on $G/K$ on the right
via the action of $G_x$ on $G$ by right multiplication, and
$$
X \cong G/G_x \cong (G/K)/\Gamma \cong (A \times Y)/\Gamma,
$$
where $Y := H/K$. Denoting by $N_G(K)$ the normalizer of $K$ in $G$,
we have
$$
\Gamma \subset N_G(K)/K = A \times N_H(K)/K.
$$
Let $\Gamma'$ denote the kernel of the projection of $\Gamma$ to
$A$. Then $\Gamma'$ (resp.~$\Gamma/\Gamma'$) is isomorphic to a
subgroup scheme of $N_H(K)/K$ (resp.~of $A$), 
and 
$$
X \cong \big(A \times (Y/\Gamma')\big)/(\Gamma/\Gamma').
$$
Thus, we may assume that $\Gamma$ acts faithfully on $A$ by
translations. On the other hand, $\Gamma$ acts $H$-equivariantly on
$Y$ via the action of $N_H(K)/K$ on $H/K$ on the right, and the
kernel of this action is isomorphic to a subgroup scheme of $A$ which
acts trivially on $X$. Thus, $\Gamma$ acts faithfully on $Y$. This
completes the proof of (\ref{eqn:iso}). 

To show the uniqueness of $(A,Y,\Gamma)$, we begin with a general 
observation: 

\begin{lemma}
Let $X$ be a variety over an arbitrary field. Then there exists an 
abelian variety $A_X$ acting faithfully on $X$, such that any action 
of an abelian variety $A$ on $X$ arises from a unique homomorphism 
$A \to A_X$. Moreover, $A_X$ centralizes any connected algebraic group
of automorphisms of $X$. 
\end{lemma}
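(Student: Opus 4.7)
The plan is to construct $A_X$ as an abelian variety of maximal dimension among those acting faithfully on $X$, and then to show that every abelian variety action on $X$ factors uniquely through it.

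\textbf{Step 1 (Dimension bound).} For any abelian variety $A$ acting faithfully on $X$ I would first show $\dim A\le \dim X$. For each closed point $x\in X$, the reduced identity component $H_x := \Stab_A(x)^0_{\red}$ is an abelian subvariety of $A$. Since abelian subvarieties of a fixed abelian variety are rigid---any flat family of them over a connected base is constant---the assignment $x\mapsto H_x$ is constant on the irreducible $X$ (first on the open locus where $\dim \Stab_A(x)^0$ achieves its minimum, then extending because the fixed locus of the common value is closed and contains a dense open). Calling the common value $H$, it fixes every closed point of $X$ and hence acts trivially on $X$, so $H=\{e\}$ by faithfulness. Consequently $\Stab_A(x)$ is finite for generic $x$, the orbit $A\cdot x\subset X$ has dimension $\dim A$, and $\dim A\le \dim X$.

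\textbf{Step 2 (Commutativity via rigidity).} I claim that if $A$ is an abelian variety and $G$ is any connected algebraic group, both acting on $X$, then the two actions commute as automorphisms of $X$. For each closed point $x\in X$, consider the morphism
\[
\Psi_x\colon A\times G\longrightarrow X,\qquad (a,g)\longmapsto(aga^{-1}g^{-1})\cdot x.
\]
Since $\Psi_x(a,e_G)=x$ for every $a\in A$, the rigidity lemma for morphisms out of a complete variety forces $\Psi_x$ to factor through the projection $A\times G\to G$, so $\Psi_x(a,g)=\Psi_x(e_A,g)=x$. Hence the commutator $[a,g]$ fixes every closed point of $X$, acts as the identity on $X$, and thus $ag=ga$ in $\Aut(X)$. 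This yields the ``Moreover'' clause of the lemma and, applied to pairs of abelian varieties, shows that any two abelian variety actions on $X$ commute.

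\textbf{Step 3 (Combining and maximizing).} Given two commuting actions $A_1,A_2\to \Aut(X)$, they combine to a group homomorphism $A_1\times A_2\to \Aut(X)$ whose scheme-theoretic kernel $K$ is a closed subgroup of $A_1\times A_2$; the quotient $(A_1\times A_2)/K$ is then an abelian variety acting faithfully on $X$. Using Step~1, pick $A_X$ of maximal dimension among abelian varieties acting faithfully on $X$. For any other action $A\to \Aut(X)$, form $B:=(A_X\times A)/K$; the faithful action of $A_X$ yields $A_X\hookrightarrow B$, and maximality of $\dim A_X$ forces $A_X=B$. The composite $A\hookrightarrow A_X\times A\twoheadrightarrow B=A_X$ is the desired homomorphism, and its uniqueness follows from the faithfulness of $A_X$ on $X$ (two such homomorphisms differ by a map $A\to \ker(A_X\to \Aut(X))=\{e\}$).

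The main obstacle is Step~2. Since $\Aut(X)$ need not be representable by a scheme, commutativity of two actions must be interpreted as an identity of morphisms $X\to X$ rather than a relation in an ambient group; the rigidity lemma for morphisms from complete varieties lets us circumvent the lack of representability by proving the commutator identity pointwise on $X$ via the auxiliary morphism $\Psi_x$.
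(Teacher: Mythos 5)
Your Steps 2 and 3 follow the paper's own line of argument: the paper likewise derives the centralizing property from Mumford's rigidity lemma, applied to the single morphism $A\times G\times X\to X$, $(a,g,x)\mapsto aga^{-1}g^{-1}x$, rather than pointwise via your $\Psi_x$ (the two are equivalent by density of closed points in the reduced scheme $A\times G\times X$), and it then ``combines the two steps'' exactly as in your Step 3, which you usefully make explicit. The genuine divergence is Step 1. The paper bounds $\dim A$ by noting that $A$ preserves the smooth locus $U$ of $X$ and, by the Nishi--Matsumura theorem, acts on the Albanese variety of $U$ with finite kernel, whence $\dim A\le\dim\operatorname{Alb}(U)\le\dim U=\dim X$. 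You instead show that the generic stabilizer is finite. Your route is more self-contained (no Albanese variety, no Nishi--Matsumura), at the cost of the one point in your write-up that needs shoring up: the constancy of $x\mapsto H_x$. The stabilizer family $\{(a,x):a\cdot x=x\}\to X$ need not be flat and its fibers need not be reduced, so ``a flat family of abelian subvarieties over a connected base is constant'' does not apply as stated, even on the stratum of minimal stabilizer dimension. This is repairable without flatness: base-change to an uncountable algebraically closed field; observe that for each fixed abelian subvariety $B\subseteq A$ the fixed locus $X^B=\{x: B\subseteq\Stab_A(x)\}$ is closed, and that on the dense open stratum $V$ of minimal stabilizer dimension $n$ one has $V=\bigcup_B (V\cap X^B)$, the union running over the countably many abelian subvarieties $B$ of dimension $n$; since an irreducible variety over an uncountable field is not a countable union of proper closed subsets, some $X^B$ contains $V$, hence equals $X$, and faithfulness forces $B=\{e\}$ and $n=0$. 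With that patch your Step 1 is sound, and the rest of your argument (including the careful handling of the non-representability of $\Aut(X)$ and of the quotient $(A_X\times A)/K$) goes through.
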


\begin{proof}
Consider an abelian variety $A$ and a connected algebraic group $G$,
both acting faithfully on $X$. Then the morphism 
$$
f: A \times G \times X \longrightarrow X, \quad 
(a, g, x) \longmapsto a g a^{-1} g^{-1} x
$$
satisfies $f(a, e_G, x) = x$. By the rigidity lemma of
\cite[p.~43]{Mu70}, it follows that $f$ factors through the projection
$p_{23} : A \times G \times X \to G \times X$. 
But $f(e_A,g,x) = x$, so that $f$ factors through the projection
$p_3 : A \times G \times X \to X$. In other words, $A$ centralizes $G$.

On the other hand, $A$ stabilizes the smooth locus $U$ of $X$. By a
theorem of Nishi and Matsumura, the induced action of $A$ on the
Albanese variety of $U$ has a finite kernel (see \cite{Ma63}, or
\cite[Thm.~2]{Br07} for a more modern version). In particular, 
$\dim(A) \leq \dim(U) = \dim(X)$. 

Combining these two steps yields our statement. 
\end{proof}

\begin{remark}
For a variety $X$, there may exist an infinite sequence 
$G_1 \subset G_2 \subset \cdots G_n \subset \cdots$
of closed connected algebraic groups, all acting faithfully and
transitively on $X$. This happens e.g. for the variety
$X = (\bA^1 \setminus \{ 0\}) \times \bA^1$ 
and the group $G_n$ consisting of automorphisms
$$ 
x \longmapsto a x, \quad y \longmapsto y + P(x),
$$
where $a \in \bG_m$ and $P$ is a polynomial of degree $\leq n$.
\end{remark}

Returning to the situation of (\ref{eqn:iso}), we claim that 
$A_X = A$. To see this, consider the action of $A$ on $X$ via its
action on itself by translations. The projection 
$p_2 : A \times Y \to Y$ induces a morphism
\begin{equation}\label{eqn:tor}
p: X \to Y/\Gamma
\end{equation}
which is an $A$-torsor for the fppf topology (since the quotient 
morphism $Y \to Y/\Gamma$ is a $\Gamma$-torsor, and hence the square
$$
\CD 
A \times Y @>{p_2}>> Y \\
@V{/\Gamma}VV @V{/\Gamma}VV \\
X @>{p}>> Y/\Gamma \\
\endCD
$$
is cartesian). Also, note that the quotient variety 
$$
Y/\Gamma = X/A = G/G_x A
$$ 
exists and is homogeneous under $H = G/A$. Thus, $A$ is contained in
$A_X$, and the quotient $A_X/A$ acts on $Y/\Gamma$. Since any 
morphism from the connected linear algebraic group $H$ to an abelian
variety is constant, the Albanese variety of $Y/\Gamma$ is trivial. 
By the Nishi-Matsumura theorem again, it follows that the action of
the abelian variety $A_X/A$ on $Y/\Gamma$ is trivial as well. In
particular, each $A_X(\bar{k})$-orbit in $X(\bar{k})$ is an
$A(\bar{k})$-orbit. This implies $\dim(A_X) = \dim(A)$, which proves
our claim.

As a consequence, $A$ (and $Y/\Gamma$) depend only on $X$. On the 
other hand, the natural map 
$$
q : X  = (A \times Y)/\Gamma \longrightarrow A/\Gamma
$$ 
is a morphism to an abelian variety, with fibers isomorphic to the 
homogeneous variety $Y$ under $H$. It follows that $q$ is the Albanese 
morphism of $X$. In particular, the subgroup scheme $\Gamma$ of $A$, 
and the $\Gamma$-variety $Y$, depend only on $X$. This shows the desired
uniqueness.

To prove the isomorphism (\ref{eqn:ten}), we first consider the case
where the group scheme $\Gamma$ is reduced. Then we have canonical
isomorphisms
$$
\displaylines{
H^*_c(X) \cong H^*_c(A\times Y)^{\Gamma}
\cong 
\big(H^*(A) \otimes H^*_c(Y)\big)^{\Gamma}  
\cr \hfill \cr 
\cong 
H^*(A) \otimes H^*_c(Y)^{\Gamma}
\cong
H^*(A) \otimes H^*_c(Y/\Gamma),
\cr}$$
where the first and last isomorphism follow from Lemma \ref{lem:fin}
(i) below, the second one from the K\"unneth isomorphism and
the properness of $A$, and the third one holds since the action of
$\Gamma$ on $H^*(A)$ is trivial (indeed, $\Gamma$ acts on $A$ by
translations).

In the general case, the reduced subscheme $\Gamma_{\red}$ is a finite 
subgroup of $\Gamma$, and the natural map
$$
(A \times Y)/\Gamma_{\red} \to (A \times Y)/\Gamma = X
$$
is finite and bijective on $\bar{k}$-rational points. By Lemma 
\ref{lem:fin} (ii) below, it follows that
$$
H^*_c(X) \cong H^*_c\big((A\times Y)/\Gamma_{\red}\big).
$$
Together with the preceding step and Lemma \ref{lem:fin} (ii) again, 
this yields the isomorphism (\ref{eqn:ten}).

Finally, (\ref{eqn:pro}) follows by combining (\ref{eqn:tra}) and 
(\ref{eqn:ten}) or, more directly, by considering the morphism 
(\ref{eqn:tor}): for any $z \in (Y/\Gamma)(\bF_{q^n})$, the fiber
$X_z$ (a variety over $\bF_{q^n}$) is a torsor under
$A_{\bF_{q^n}}$. By Lang's theorem, it follows that $X_z$ contains
$\bF_{q^n}$-rational points, and these form a unique orbit of
$A(\bF_{q^n})$.

\begin{lemma}\label{lem:fin}
{\rm (i)} Let $\Gamma$ be a finite group acting on a variety $X$ 
such that the quotient morphism $f : X \to Y$ exists, where $Y$ is
a variety (this assumption is satisfied if $X$ is quasi-projective, 
see \cite[p.~69]{Mu70}). Then $\Gamma$ acts on $H^*_c(X)$, and we have 
a canonical isomorphism
$$
H^*_c(Y) \cong H^*_c(X)^{\Gamma}.
$$ 

\noindent
{\rm (ii)} Let $f: X \to Y$ be a finite morphism of varieties,
bijective on $\bar{k}$-rational points. Then we have a canonical 
isomorphism
$$
H^*_c(Y) \cong H^*_c(X).
$$ 
\end{lemma}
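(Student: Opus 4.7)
\medskip

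\noindent\textbf{Plan for Lemma \ref{lem:fin}.}

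For part (i), the natural plan is to push $\bQ_\ell$ forward along $f$ and then take $\Gamma$-invariants. Because $\Gamma$ is finite, the quotient morphism $f : X \to Y$ is finite, hence proper, and higher direct images of \'etale sheaves vanish, so $Rf_! \bQ_\ell = f_*\bQ_\ell$. This gives a canonical identification
\[
H^*_c(X) \;=\; H^*_c(Y, f_* \bQ_\ell),
\]
on which $\Gamma$ acts through its action on $f_*\bQ_\ell$. I would then check that the unit map $\bQ_\ell \to (f_*\bQ_\ell)^\Gamma$ is an isomorphism of \'etale sheaves on $Y$, by computing on geometric stalks: at $\bar y \in Y(\bar k)$, the stalk $(f_*\bQ_\ell)_{\bar y}$ is $\bQ_\ell^{f^{-1}(\bar y)}$, and $\Gamma$ acts transitively on the finite set $f^{-1}(\bar y)$, which is exactly the characteristic property of the geometric quotient. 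Finally, since $|\Gamma|$ is invertible in $\bQ_\ell$, the functor $(-)^\Gamma$ is exact on $\bQ_\ell[\Gamma]$-modules, so it commutes with taking cohomology, yielding
\[
H^*_c(Y) \;\cong\; H^*_c(Y, f_*\bQ_\ell)^\Gamma \;\cong\; H^*_c(X)^\Gamma.
\]

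For part (ii), the plan is to reduce to the topological invariance of the \'etale site. I would first argue that $f$ is a universal homeomorphism: being finite and bijective on $\bar k$-points, $f$ is surjective, and for every $y \in Y$ the fiber $X_y$ is a finite scheme with a single geometric point, hence its reduction is purely inseparable over the residue field $\kappa(y)$. Thus $f$ is radicial (universally injective) and surjective, so a universal homeomorphism. By SGA~4, VIII.1.1, $f^*$ then induces an equivalence between the \'etale topoi of $Y_{\bar k}$ and $X_{\bar k}$; in particular $Rf_* \bQ_\ell \cong \bQ_\ell$. Combined with properness of $f$, which gives $Rf_! = Rf_*$, this yields the desired isomorphism $H^*_c(X) \cong H^*_c(Y)$.

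\medskip

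\noindent I do not expect serious obstacles: part (i) is a standard Leray/invariants argument, and the only delicate points are the stalk computation showing $(f_*\bQ_\ell)^\Gamma = \bQ_\ell$ and the exactness of $\Gamma$-invariants (which fails for torsion coefficients but is fine for $\bQ_\ell$). Part (ii) reduces to citing the topological invariance of the \'etale site; the one step worth spelling out carefully is the passage from ``bijective on $\bar k$-points'' to ``radicial,'' which uses that $\bar k$ is algebraically closed so that emptiness of additional geometric points in the fibers forces each residue field extension to be purely inseparable.
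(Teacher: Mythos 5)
Your proof of (i) is essentially the paper's own argument: identify $H^*_c(X)$ with $H^*_c(Y_{\bar k};f_*\bQ_\ell)$ using finiteness of $f$, and check on geometric stalks that $\bQ_\ell\to(f_*\bQ_\ell)^\Gamma$ is an isomorphism via the transitivity of $\Gamma$ on the geometric fibers. For (ii) the paper simply repeats the stalk computation to get $f_*\bQ_\ell\cong\bQ_\ell$ directly, whereas you route through the topological invariance of the \'etale site; both are correct and amount to the same thing.
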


\begin{proof}
(i) Note that $f_!\bQ_{\ell} = f_*\bQ_{\ell}$ and 
$R^i f_!\bQ_{\ell} = 0$ for all $i \geq 1$, since $f$ is finite. This
yields a canonical isomorphism
\begin{equation}\label{eqn:dir}
H^*_c(X) \cong 
H^*_c(Y_{\bar{k}} ; f_* \bQ_{\ell}).
\end{equation}
Moreover, $\Gamma$ acts on $f_* \bQ_{\ell}$ and hence on $H^*_c(X)$. 
Thus, (\ref{eqn:dir}) restricts to an isomorphism
$$
H^*_c(X)^{\Gamma} \cong H^*_c(Y_{\bar{k}} ; (f_* \bQ_{\ell})^{\Gamma}).
$$
To complete the proof, it suffices to show that the natural map from
the constant sheaf $\bQ_{\ell}$ to $f_* \bQ_{\ell}$ induces an
isomorphism $\bQ_{\ell} \cong (f_* \bQ_{\ell})^{\Gamma}$. In turn, it
suffices to prove that 
\begin{equation}\label{eqn:com}
H^0(X_{\bar{y}}; \bQ_{\ell})^{\Gamma} \cong \bQ_{\ell},
\end{equation}
where $X_{\bar{y}}$ denotes the geometric fiber of $f$ at an arbitrary
point $y \in Y$. But $X_{\bar{y}}$ is a finite scheme over the field
$\overline{\kappa(y)}$, equipped with an action of $\Gamma$ which
induces a transitive action on its set of connected components; this
implies (\ref{eqn:com}). 

(ii) is checked similarly; here the map 
$\bQ_{\ell} \to f_*\bQ_{\ell}$ is an  isomorphism. 
\end{proof}

\begin{remarks}
(i) Lemma \ref{lem:fin} is certainly well-known, but we could not
locate a specific reference. The first assertion is exactly  
\cite[(5.10)]{Sr79}; however, the proof given there is only valid for 
$\Gamma$-torsors.

\medskip

\noindent
(ii) If $X$ in Theorem \ref{thm:prod} is complete, then $\Gamma$ is 
trivial in view of a result of Sancho de Salas (see \cite{SS03}). 
Moreover, we have $Y \cong H/Q$, where $Q$ is a subgroup scheme of 
$H$ such that the reduced subscheme $Q_{\red}$ is a parabolic subgroup. 
It follows easily that $\vert Y(\bF_{q^n}) \vert$ is a polynomial 
function of $q^n$ (for details, see Steps 1 and 3 in Section 
\ref{sec:fact}).

For an arbitrary homogeneous variety $X$, the subgroup scheme $\Gamma$
is generally non-trivial. Indeed, consider an abelian variety $A$ having
a $k$-rational point $p$ of order $2$. Let also $Y := \SL(2)/T$, where
$T\subset \SL(2)$ denotes the diagonal torus. The group $\Gamma$ of
order $2$ acts on $A$ via translation by $p$, and on $Y$ via right
multiplication by the matrix
$\begin{pmatrix}
0  & -1 \\ 
1 & 0 \\
\end{pmatrix}$ 
which normalizes $T$; the variety $X := (A \times Y)/\Gamma$ is the
desired example. One easily checks that
$$
\vert (Y/\Gamma)(\bF_{q^n}) \vert = q^{2n}, \quad \text{whereas} \quad 
\vert Y(\bF_{q^n}) \vert = q^n (q^n + 1).
$$
Thus, $Y/\Gamma$ cannot be replaced with $Y$ in the equality
(\ref{eqn:pro}). 

\medskip

\noindent
(iii) The isomorphism (\ref{eqn:iso}) only holds for homogeneous
varieties defined over finite fields. Consider indeed a field $k$ 
which is not algebraic over a finite subfield.
By \cite{ST67}, there exists an elliptic curve $C$ over $k$,
having a $k$-rational point $x$ of infinite order. Let $L$ be the 
line bundle on $C$ associated with the divisor $(x) - (0)$. 
Denote by $G$ the complement of the zero section in the total space 
of $L$, and by $q : G \to C$ the projection; then $q$ is a torsor under
the multiplicative group $\bG_m$. In fact, $G$ has a structure of an 
algebraic group over $k$, extension of $C$ by $\bG_m$; in particular, 
$q$ is the Albanese map. If the isomorphism (\ref{eqn:iso}) holds for 
$G$, then $C \cong A/\Gamma$ and $Y \cong \bG_m$. Thus, $G$ has 
non-constant regular functions, namely, the non-constant regular 
functions on the quotient $Y/\Gamma \cong \bG_m$. In other words, 
there exists an integer $n \neq 0$ such that the power $L^n$ has a 
non-zero section; but this is impossible, since $L^n$ is a non-trivial 
line bundle of degree $0$.

The above group $G$ is an example of an anti-affine algebraic group
in the sense of \cite{Br09}. That article contains a classification 
of these groups, and further examples in characteristic zero.      
\end{remarks}

\section{Proof of Theorem \ref{thm:pepo}}
\label{sec:pepo}

First, it suffices to show that $\vert X(\bF_{q^n})\vert$
is a \emph{periodic Laurent polynomial function of $q^n$ with 
algebraic integer coefficients}, i.e., there exist a positive integer
$N$ and $P_0(t), \ldots, P_{N-1}(t) \in \bar{\bZ}[t,t^{-1}]$ satisfying
(\ref{eqn:pepo}), where $\bar{\bZ}$ denotes the ring of algebraic
integers. Indeed, if $P(t) \in \bar{\bZ}[t,t^{-1}]$ and 
$P(q^n)$ is an integer for infinitely many positive integers $n$, 
then $P(t) \in \bZ[t]$.  

Next, it suffices to show the following: 

\begin{proposition}\label{prop:eig}
Let $X$ be a homogeneous variety under a linear algebraic group. 
Then each eigenvalue $\alpha$ of $F$ acting on $H^*_c(X)$ is of the
form $\zeta \, q^j$, where $\zeta = \zeta(\alpha)$ is a root of 
unity, and $j = j(\alpha)$ is an integer.
\end{proposition}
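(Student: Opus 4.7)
My plan is to establish the result by a chain of reductions terminating in a case amenable to the classical theory of reductive groups. Throughout, I use that the class of algebraic numbers of the form $\zeta q^j$ (with $\zeta$ a root of unity and $j \in \bZ$) is closed under sums, products, and subquotients of $F$-stable subspaces.

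\textbf{Step 1 (reduction to $X = G/H$ with $G, H$ smooth connected).} Replacing the acting group by its identity component and $X$ by one of its connected components, we may assume both $G$ and $X$ are connected. By Lang's theorem, $X$ has a $k$-rational point, so $X \cong G/H$ for a closed subgroup scheme $H \subset G$. Applying Lemma~\ref{lem:fin} to the chain of finite maps $G/(H^0)_{\red} \to G/H^0 \to G/H$ exhibits $H^*_c(X)$ as an $F$-stable subspace of $H^*_c(G/(H^0)_{\red})$, reducing to the case where $H$ is smooth and connected.

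\textbf{Step 2 (reduction to $G$ reductive).} Let $R$ denote the unipotent radical of $G$ and consider the $G$-equivariant morphism
$$
\pi : X = G/H \longrightarrow Y := G/(RH),
$$
whose geometric fibers are isomorphic to the smooth connected unipotent variety $R/(R \cap H)$. Since such a variety is isomorphic, over $\bar{k}$, to an affine space $\bA^d$, the fiber cohomology is concentrated in degree $2d$ with $F$ acting by a scalar of the form $\zeta q^d$ (the root of unity $\zeta$ coming from the $k$-form). Consequently $R^q\pi_!\bQ_\ell$ vanishes for $q \neq 2d$ and is a rank-one constructible sheaf at $q = 2d$ with $F$-eigenvalues of the required form. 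The $F$-equivariant Leray spectral sequence
$$
E_2^{p,q} = H^p_c(Y, R^q\pi_!\bQ_\ell) \Rightarrow H^{p+q}_c(X)
$$
then reduces the problem to the eigenvalues on $H^*_c(Y)$, where $Y$ is homogeneous under the reductive quotient $G/R$.

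\textbf{Step 3 (the reductive case).} When $G$ is reductive and $H$ is parabolic, the Bruhat decomposition exhibits $G/H$ as a disjoint union of affine cells over $\bar{k}$, giving the required eigenvalue form directly. For a general closed connected $H \subset G$, I would proceed by a similar devissage on $H$: set $U := R_u(H)$ and analyze the tower $G \to G/U \to G/H$. The first arrow is a torsor under the smooth unipotent group $U$, so $H^*_c(G/U)$ differs from $H^*_c(G)$ by a clean Tate shift (the fibers being affine spaces), and $H^*_c(G)$ itself has $F$-eigenvalues of the required form by classical results on the cohomology of connected linear algebraic groups. The second arrow is a principal bundle under the reductive connected group $H/U$, and its Leray spectral sequence, combined with the pure Tate structure of $H^*(H/U)$, allows one in principle to extract the eigenvalues on $H^*_c(G/H)$.

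\textbf{Main obstacle.} The technical heart lies in Step~3 for a general closed connected subgroup $H$ of a reductive $G$. As the authors emphasize, Levi decompositions of $H$ need not split in positive characteristic, so the Leray spectral sequence for the principal $(H/U)$-bundle $G/U \to G/H$ cannot simply be replaced by a tensor-product decomposition, and a more intrinsic analysis is required. I expect the actual proof either to carry out a direct geometric stratification (for instance a Bialynicki--Birula decomposition of a $G$-equivariant compactification of $G/H$ with respect to a generic $\bG_m \subset G$), or to use an equivariant-cohomology argument linking $H^*_c(G/H)$ to $H^*(BG)$ and $H^*(BH)$, both of which are known to have Tate Frobenius eigenvalues by Behrend-type results.
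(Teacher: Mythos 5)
Your Steps 1 and 2 are sound and agree with the paper's reductions, modulo two small points: you should pass to $H_{\red}$ \emph{before} quotienting by the component group, so that $H/H^0$ is an honest finite (\'etale) group and Lemma \ref{lem:fin}(i) applies; and the trace map identifies $R^{2d}\pi_!\bQ_{\ell}$ with $\bQ_{\ell}(-d)$ canonically, so $F$ acts on the fiber cohomology of the unipotent fibration by exactly $q^d$, with no extra root of unity. The genuine gap is precisely where you place it yourself: the descent of the eigenvalue property along the principal bundle $G/U \to G/H$ under the connected group $H/U$ --- equivalently, along the $H^0$-torsor $G \to G/H^0$ --- is the entire technical content of the proof, and you supply no argument for it. Neither of the mechanisms you speculate about is what the paper uses: the authors explicitly say they avoid equivariant cohomology, and a decomposition into locally closed tori (or a Bialynicki--Birula-type stratification) is generally unavailable, since some of these homogeneous spaces are not even rational over $\bar{k}$.

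What the paper actually proves (Proposition \ref{prop:tors}) is that for a torsor $\pi : X \to Y$ under a connected linear algebraic group, $X$ is weakly pure if and only if $Y$ is. The proof factors $\pi$ as $X \to X/T \to X/B \to X/G = Y$ for a Borel subgroup $B$ and maximal torus $T$ of the structure group. The step between $X/B$ and $X/G$ uses the degeneration of the Leray spectral sequence of the flag bundle (the cohomology of the fiber $G/B$ is generated by Chern classes of line bundles which extend to $X/B$), giving $H^*_c(X/B) \cong H^*_c(X/G) \otimes H^*(G/B)$; the step between $X$ and $X/T$ reduces to a split $\bG_m$-torsor and uses the Gysin sequence, the nontrivial direction being obtained by decreasing induction on the cohomological degree starting from $H^{2\dim(Y)}_c(Y)$, on which $F$ acts by $q^{\dim(Y)}$. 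With this in hand the proposition follows in three lines: $G \to \Spec(k)$ shows $G$ is weakly pure, $G \to G/H^0$ shows $G/H^0$ is, and Lemma \ref{lem:wep}(i) handles the finite quotient $G/H = (G/H^0)/(H/H^0)$. (An alternative elementary route, which your Step 3 gestures towards, is to keep reducing $H$: replace $H$ by the normalizer of a maximal torus of $H$, using Steinberg's theorem that the variety of maximal tori is acyclic, so that $H^0$ becomes a torus, and then read off the eigenvalues from the Bruhat decomposition of $G/H$ into pieces $U^w \times U \times T/H$.) Either way, a concrete descent argument must be supplied; as written, your proof is incomplete at its central step.
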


Indeed, in view of Grothendieck's trace formula (\ref{eqn:tra}), 
that proposition implies readily that $\vert X(\bF_{q^n})\vert$ 
is a periodic Laurent polynomial function of $q^n$, with coefficients 
being sums of roots of unity.

Before proving the proposition, we introduce two notions 
which will also be used in the proof of Theorem \ref{thm:fact}.

\begin{definition} 
We say that a variety $X$ is \emph{weakly pure}, if it satisfies the
assertion of Proposition \ref{prop:eig}.

Also, $X$ is \emph{strongly pure} if $H^i_c(X) = 0$ for any odd $i$, 
and for any even $i$, each eigenvalue of $F$ acting on $H^i_c(X)$ is
of the form $\zeta \, q^{\frac{i}{2}}$, where $\zeta$ is a root of
unity.
\end{definition}

Clearly, any strongly pure variety $X$ is weakly pure; it is also pure
in the (usual) sense that all the complex conjugates of eigenvalues of
$F$ acting on $H^i_c(X)$ have absolute value $q^{\frac{i}{2}}$, for all
$i$. Yet some weakly pure varieties are not pure, e.g., tori.

Weak and strong purity are preserved under base change by any finite
extension; specifically, a variety $X$ is weakly (resp.~strongly) pure
if and only if so is $X_{\bF_{q^n}}$ for some (or for any) positive
integer $n$. Further easy properties of these notions are gathered in
the following: 

\begin{lemma}\label{lem:wep}
{\rm (i)} Let $\Gamma$ be a finite group acting on a variety $X$, 
such that the quotient variety $Y = X/\Gamma$ exists. If $X$ is 
weakly (resp.~strongly) pure, then so is $Y$.

\noindent
{\rm (ii)} Let $f : X \to Y$ be a finite morphism of varieties, 
bijective on $\bar{k}$-rational points. Then $X$ is weakly 
(resp.~strongly) pure if and only if so is $Y$.

\noindent
{\rm (iii)} Let $Y$ be a closed subvariety of a variety $X$, with
complement $U$. If both $Y$ and $U$ are weakly (resp.~strongly)
pure, then so is $X$.
\end{lemma}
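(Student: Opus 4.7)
The plan is to treat each of the three assertions by expressing the relevant cohomology as built out of other cohomologies via $F$-equivariant maps, and then to exploit the obvious closure properties of the set $\{\zeta\, q^j : \zeta \text{ a root of unity}, j \in \bZ\}$ under subquotients (for weak purity) and of the condition ``all eigenvalues in degree $i$ have the form $\zeta\, q^{i/2}$'' under subquotients and extensions within a fixed degree (for strong purity).

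For (i), I would invoke Lemma \ref{lem:fin}(i), which provides a canonical isomorphism $H^i_c(Y) \cong H^i_c(X)^{\Gamma}$. Since the $\Gamma$-action on $X$ is defined over $k$, it commutes with $F$, so this identification is $F$-equivariant. Thus every eigenvalue of $F$ on $H^i_c(Y)$ is already an eigenvalue of $F$ on $H^i_c(X)$, and $H^i_c(Y) = 0$ whenever $H^i_c(X) = 0$; both the weak and the strong purity conclusions follow immediately. Part (ii) is handled identically, but using Lemma \ref{lem:fin}(ii): that lemma already supplies an $F$-equivariant isomorphism $H^*_c(X) \cong H^*_c(Y)$, so eigenvalues and vanishing degrees agree on the nose and the equivalence runs in both directions.

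For (iii), I plan to use the long exact sequence of cohomology with compact supports
\[
\cdots \to H^i_c(U) \to H^i_c(X) \to H^i_c(Y) \to H^{i+1}_c(U) \to \cdots,
\]
which is $F$-equivariant. Breaking it into short exact sequences realizes each $H^i_c(X)$ as an extension of a sub-$F$-module of $H^i_c(Y)$ by a quotient $F$-module of $H^i_c(U)$. For weak purity, the set $\{\zeta\, q^j\}$ is stable under taking eigenvalues on subquotients and on extensions, which yields the claim. For strong purity, vanishing of $H^i_c(U)$ and $H^i_c(Y)$ for odd $i$ forces vanishing of $H^i_c(X)$ in odd degrees directly from the sequence, and for $i = 2m$ the middle term is an extension of two $F$-modules on which every eigenvalue has the form $\zeta\, q^m$, so the same holds on $H^{2m}_c(X)$.

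I do not anticipate a genuine obstacle: the cohomological machinery is already in place, and the argument reduces to routine verifications of $F$-equivariance and of the (essentially trivial) closure of the relevant eigenvalue sets under subquotients and degree-preserving extensions. The only mild point worth spelling out is that in (i) one is not merely extracting a direct summand but invariants, which is still harmless because invariants of a finite group acting on an $F$-module form an $F$-stable subspace whose eigenvalues inject into those of the ambient module.
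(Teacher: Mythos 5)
Your proof is correct and follows exactly the paper's (very terse) argument: (i) and (ii) via the $F$-equivariant identifications $H^*_c(Y)\cong H^*_c(X)^{\Gamma}$ and $H^*_c(Y)\cong H^*_c(X)$ from Lemma \ref{lem:fin}, and (iii) via the compactly supported long exact sequence $H^i_c(U)\to H^i_c(X)\to H^i_c(Y)$ together with stability of the eigenvalue conditions under subquotients and extensions. The details you spell out (in particular $F$-equivariance of the $\Gamma$-action and the extension structure of $H^i_c(X)$) are precisely the routine verifications the paper leaves implicit.
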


\begin{proof}
(i) and (ii) follow from  Lemma \ref{lem:fin}, and (iii) from the
exact sequence 
$
H^i_c(U) \longrightarrow H^i_c(X) \longrightarrow H^i_c(Y)$.
\end{proof}

Next, we obtain a result of independent interest, which is the main
ingredient of the proof of Proposition \ref{prop:eig}:

\begin{proposition}\label{prop:tors}
Let $G$ be a connected linear algebraic group, and $\pi : X \to Y$ 
a $G$-torsor, where $X$ and $Y$ are varieties. Then $X$ is weakly 
(resp.~strongly) pure if and only if so is $Y$.
\end{proposition}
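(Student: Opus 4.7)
My plan is to prove Proposition~\ref{prop:tors} by d\'evissage on $G$, combined with Lemma~\ref{lem:wep}. As a preliminary observation, both weak and strong purity are stable under finite extension of $k$, and the $G$-torsor property descends along such extensions; this lets me pass freely to a finite extension of $k$ and assume throughout that the tori and the semisimple quotient of $G$ are split. The basic d\'evissage step is as follows: for any closed normal subgroup $H\trianglelefteq G$, the torsor $\pi:X\to Y$ factors canonically as $X\to X/H\to Y$, where the first map is an $H$-torsor and the second is a $(G/H)$-torsor. Hence if the proposition is known for $H$-torsors and for $(G/H)$-torsors, it holds for $G$-torsors. Combined with the filtration $R_u(G)\trianglelefteq R(G)\trianglelefteq G$ and the structure theory of connected unipotent, toral, and semisimple groups, induction on $\dim G$ reduces the proof to the three base cases $G\cong\bG_a$, $G\cong\bG_m$, and $G$ simple split semisimple.

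In the cases $G=\bG_a$ and $G=\bG_m$, every $G$-torsor is Zariski-locally trivial: $\bG_a$-torsors correspond to classes in $H^1(Y,\cO_Y)$ which vanish on affine opens, while $\bG_m$-torsors correspond to line bundles. I stratify $Y$ into a dense open $U$ on which $\pi$ trivializes and its closed complement of smaller dimension; combining Lemma~\ref{lem:wep} (iii) with noetherian induction on $\dim Y$, the problem reduces to the trivial bundle $U\times G\to U$. There the K\"unneth formula $H^*_c(U\times G)\cong H^*_c(U)\otimes H^*_c(G)$, together with the explicit computation that the Frobenius eigenvalues on $H^*_c(\bG_a)$ and $H^*_c(\bG_m)$ are all of the form $q^j$, yields the desired equivalence.

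The main obstacle will be the semisimple case. For $G$ simple split semisimple I plan to choose a Borel subgroup $B\subset G$ and factor $\pi$ through $X/B$, splitting it as a $B$-torsor $X\to X/B$ (with $B$ solvable, hence reducible by further d\'evissage to the preceding base cases) followed by a flag-variety bundle $X/B\to Y$. On the latter I exploit the Bruhat decomposition $G/B=\bigsqcup_{w\in W}C_w$ into affine cells $C_w\cong\bA^{\ell(w)}$: being $B$-stable, this transports to a canonical stratification of $X/B$ by affine-space bundles over $Y$, to which Lemma~\ref{lem:wep} (iii) applies by induction. The subtle point is that the bundle $X/B\to Y$ is only \'etale-locally trivial in general, which requires care in descending the Bruhat strata through the \'etale cover trivializing the torsor; this is where I expect the real work to lie.
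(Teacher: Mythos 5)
Your overall d\'evissage (a normal series for $G$, reducing to $\bG_a$, $\bG_m$ and a flag bundle) is close in spirit to the paper's factorization $X \to X/T \to X/B \to Y$, but the way you handle the base cases has a genuine gap. Your plan for $\bG_a$- and $\bG_m$-torsors is to stratify $Y$ into a dense open $U$ trivializing the torsor and its complement, and to conclude by noetherian induction via the open--closed decomposition of Lemma \ref{lem:wep}. But that decomposition statement is a one-way implication: purity of the strata implies purity of the union, and \emph{not} conversely --- weak or strong purity is not inherited by open or closed subvarieties. (For instance $\bP^2$ is strongly pure, but the complement of a smooth plane curve $C$ of positive genus is not even weakly pure, since $H^2_c(\bP^2\setminus C)\cong H^1(C)$ carries weight-one Frobenius eigenvalues.) So from the hypothesis that $Y$ (or $X$) is pure you can conclude nothing about the strata, and the induction never gets started in either direction of the ``if and only if''. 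The paper avoids this entirely: for the $\bG_m$-case it uses the \emph{global} Gysin sequence attached to the first Chern class of the associated line bundle, obtaining one implication directly and the converse by induction on the cohomological degree, anchored by the known Frobenius action on $H^{2\dim Y}_c(Y)$; for the unipotent part it uses that the fibers are affine spaces, so $Rf_!\bQ_{\ell}$ is globally a shift and Tate twist of $\bQ_{\ell}$ and the compactly supported cohomologies of source and target are literally identified --- no stratification is needed.

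The semisimple step has a second problem. The Bruhat cells $B n_w B/B\subset G/B$ are $B$-stable but not $G$-stable, while the bundle $X/B\to Y$ is associated to the $G$-torsor $X\to Y$ with $G$ acting on the fiber $G/B$; so the cells do not ``transport canonically'' to a stratification of $X/B$, and no amount of care with the \'etale cover trivializing the torsor will produce one (and even if it did, the objection of the previous paragraph would recur stratum by stratum). The mechanism the paper uses instead is that the Leray spectral sequence of $X/B\to Y$ degenerates because $H^*(G/B)$ is generated by Chern classes of line bundles attached to characters of $B$, and these line bundles extend to $X/B$; this yields $H^*_c(X/B)\cong H^*_c(Y)\otimes H^*(G/B)$ compatibly with Frobenius, from which both implications follow using the strong purity of $G/B$. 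I would rework your proof around these two global cohomological identities rather than around local triviality and stratifications.
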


\begin{proof}
(i) We may choose a Borel subgroup $B$ of $G$ and a maximal torus 
$T$ of $B$. Then $\pi$ is the composite morphism
$$
\CD
X @>{\pi_T}>> X/T @>{\varphi}>> X/B @>{\psi}>> X/G = Y,
\endCD
$$
where $\pi_T$ is a $T$-torsor, $\varphi$ is smooth with fiber $B/T$
isomorphic to the unipotent radical of $B$, and $\psi$ is projective 
and smooth with fiber $G/B$, the flag variety of $G$.

We claim that $X/T$ is weakly (resp.~strongly) pure if and only if 
so it $X/B$. Indeed, $B/T$ is isomorphic to an affine space $\bA^d$, 
and hence $R^i \varphi_! \bQ_{\ell} = 0$ for all $i \neq 2d$, while
$R^{2d} \varphi_!\bQ_{\ell} \cong \bQ_{\ell}(-d)$ via the trace map.
This yields a canonical isomorphism
$$
H^i_c(X/B) \cong H^{i + 2d} _c(X/T)(d).
$$
Thus, the eigenvalues of $F$ in $H^i_c(X/B)$ are exactly the 
products $\beta \, q^{-d}$, where $\beta$ is an eigenvalue of $F$ in 
$H^{i + 2d}_c(X/T)$. This implies our claim.

Next, we claim that $X/B$ is weakly (resp.~strongly) pure if and only 
if so is $X/G$. Indeed, the Leray spectral sequence associated with
the flag bundle $\psi$ degenerates (since the cohomology ring of the
fiber $G/B$ is generated by Chern classes of line bundles associated
with characters of $B$, and all such line bundles extend to $X/B$);
moreover, the sheaves 
$R^j \psi_! \bQ_{\ell} = R^j \psi_* \bQ_{\ell}$ are constant. This
yields an isomorphism of graded $\bQ_{\ell}$-vector spaces with
$F$-action
$$
H^*_c(X/B) \cong H^*_c(X/G) \otimes H^*(G/B).
$$
In particular, $H^*_c(X/G)$ may be identified with a $F$-stable
subspace of $H^*_c(X/B)$. Thus, if $X/B$ is weakly (resp.~strongly)
pure, then so is $X/G$. The converse holds since $G/B$ is strongly
pure (as follows from the Bruhat decomposition, see Step 3 in Section 
\ref{sec:fact} for details).

By combining both claims, we may assume that $G = T$. Replacing $k$
with a finite extension, we may further assume that $T$ is
split. Thus, we are reduced to the case where $G = T = \bG_m$. Then we
have the Gysin long exact sequence
$$
\CD
\cdots  
H^i_c(X) @>>> H^{i-2}_c(Y)(2) @>{c_1(L)}>> H^i_c(Y) @>>> H^{i+1}_c(X) 
\cdots,
\endCD 
$$
where $c_1(L)$ denotes the multiplication by the first Chern class of 
the invertible sheaf $L$ associated with the $\bG_m$-torsor 
$\pi: X \to Y$. Thus, if $Y$ is weakly (resp.~strongly) pure, then so 
is $X$. The converse is obtained by decreasing induction on $i$, 
since $H^i_c(Y) = 0$ for each $i > 2 \dim(Y)$, and $F$ acts on 
$H^{2\dim(Y)}_c(Y)$ via multiplication by $q^{\dim(Y)}$. 
\end{proof}

We may now prove Proposition \ref{prop:eig}. We have $X \cong G/H$, 
where $G$ is a linear algebraic group, and $H$ a closed subgroup 
scheme. Since $X$ is a variety, we may assume that $G$ is connected.
Moreover, since the reduced subscheme $H_{\red}$ is a closed 
algebraic subgroup, and the natural map 
$G/H_{\red} \to G/H$ is finite and bijective on $\bar{k}$-rational 
points, we may assume that $H$ is an algebraic group in view of 
Lemma \ref{lem:fin}(ii). 

Applying Proposition \ref{prop:tors} to the torsors $G \to \Spec(k)$
and $G \to G/H^0$ (where $H^0$ denotes the neutral component of $H$),
we see that $G$ and $G/H^0$ are weakly pure. Thus, so is
$G/H \cong (G/H^0)/(H/H^0)$, by Lemma \ref{lem:wep}(i).

\section{Proof of Theorem \ref{thm:fact}}
\label{sec:fact}

As above, we consider a homogeneous variety $X = G/H$, where $G$ 
is a connected linear algebraic group, and $H$ is a closed 
subgroup scheme. We first reduce to the case where $G$ is reductive, 
and $H$ is a closed subgroup such that $H^0$ is a torus. For this, 
we carry out a sequence of four reduction steps, where we use
elementary counting arguments rather than $\ell$-adic cohomology and
the Grothendieck trace formula, to prepare the way for the completely
elementary proofs of Section \ref{sec:elem}. 

\medskip

\noindent
\emph{Step 1.} 
Since the natural map $(G/H_{\red})(\bar{k}) \to (G/H)(\bar{k})$ is
bijective, we have
$\vert (G/H)(\bF_{q^n}) \vert = \vert (G/H_{\red})(\bF_{q^n}) \vert$
for any positive integer $n$, and hence 
$$
P_{G/H}(t) = P_{G/H_{\red}}(t).
$$ 
Replacing $(G,H)$ with $(G,H_{\red})$, we may thus assume that 
\emph{$H$ is an algebraic group}.

\medskip

\noindent
\emph{Step 2.} 
The unipotent radical $R_u(G)$ acts on $X$, with quotient morphism 
the natural map $f: G/H \to G/R_u(G) H$. 
The fiber of $f$ at any coset $g R_u(G) H$ equals
$$
g R_u(G) H/H \cong R_u(G)/\big(R_u(G) \cap g H g^{-1} \big) \cong 
R_u(G)/g \big(R_u(G) \cap H \big)g^{-1}.
$$
The induced map
$(G/H)(\bF_{q^n}) \to \big(G/R_u(G) H)(\bF_{q^n}\big)$
is surjective by Lang's theorem. Moreover, since $R_u(G)$ is connected
and unipotent, each fiber has $q^{nd}$ elements, where 
$d := \dim R_u(G)/\big(R_u(G) \cap H \big)$. Hence
$$
P_{G/H}(t) = t^{d} P_{G/R_u(G)H}(t).
$$
Replacing $G$ with the quotient $G/R_u(G)$ and $H$ with its image in
that quotient, we may assume in addition that $G$ \emph{is reductive}. 

\medskip

\noindent
\emph{Step 3.}
If $H$ is not reductive, then it is contained in some proper
parabolic subgroup $P$ of $G$ (see \cite{BT71}). We may further
assume that $H$ is not contained in a proper parabolic subgroup
of $P$; then the image of $H$ in the reductive group $P/R_u(P)$ 
is reductive as well.
 
Choose a Borel subgroup $B$ of $P$ and a maximal torus $T$ 
of $B$; denote by $U$ the unipotent radical of $B$, and by $N_G(T)$ 
the normalizer of $T$ in $G$. Then 
$W := N_G(T)/T$ is the Weyl group of $(G,T)$; it contains the Weyl group 
$W_P$ of $(P,T)$. Choose a set $W^P$ of representatives in $W$ of the
quotient $W/W_P$. Also, for any $w \in W^P$, choose a representative
$n_w \in N_G(T)$. Then, by the Bruhat decomposition, we have
\begin{equation}\label{eqn:bru}
G  = \bigcup_{w \in W^P} B n_w P,
\end{equation}
where the $B n_w P$ are disjoint locally closed subvarieties. 
Moreover, for any $w \in W^P$, there exists a closed subgroup
$U^w$ of $U$, normalized by $T$, such that the map
\begin{equation}\label{eqn:cel}
U^w \times P \longrightarrow  B n_w P, \quad
(x,y) \longmapsto x n_w y
\end{equation}
is an isomorphism; each $U^w$ is isomorphic to an affine space. Thus,
$G/H$ is the disjoint union of the locally closed subvarieties
$$
B n_w P/H \cong U^w \times P/H.
$$  
It follows that
$$
\vert (G/H)(\bF_{q^n}) \vert = \vert (G/P)(\bF_{q^n}) \vert \; 
\vert (P/H)(\bF_{q^n}) \vert,
$$
and $\vert (G/P)(\bF_{q^n}) \vert$ is a polynomial function of $q^n$
with non-negative integer coefficients. This yields the factorization 
$$
P_{G/H}(t) = P_{G/P}(t) \; P_{P/H}(t).
$$
Thus, we may replace $(G,H)$ with $(P,H)$. By Step 2, we may further
replace $(P,H)$ with $\big(P/R_u(P), R_u(P)H/R_u(P)\big)$. So we may 
assume that \emph{$G$ and $H$ are both reductive}.

\medskip

\noindent
\emph{Step 4.}
We now choose a maximal torus $T_H$ of $H$, and denote by $N_H$ its 
normalizer in $H$. We claim that the natural map 
$$
p : G/N_H \to G/H
$$ 
induces a surjective map
$(G/N_H)(\bF_{q^n}) \to (G/H)(\bF_{q^n})$ 
such that each fiber has $q^{n\dim(H/N_H)}$ elements.

Indeed, consider $g \in G(\bar{k})$ such that the coset $g H_{\bar{k}}$
lies in $(G/H)(\bF_{q^n})$. Then the subgroup 
$(g H g^{-1})_{\bar{k}}$ of $G_{\bar{k}}$ is 
defined over $\bF_{q^n}$. Moreover, the fiber of $p$ at 
$g H_{\bar{k}}$ is isomorphic to the variety of maximal tori of 
$(g H g^{-1})_{\bar{k}}$; hence its number of $\bF_{q^n}$-rational 
points equals $q^{n \dim(H/N_H)}$, by a theorem of Steinberg 
(see \cite[Cor.~14.16]{St68}). This implies our claim and, in turn,
the equality
$$
P_{G/H}(t) = t^{-\dim(H/N_H)} \; P_{G/N_H}(t).
$$
Replacing $(G,H)$ with $(G,N_H)$, we may thus assume that $H^0$ 
\emph{is a torus}. 

\medskip

We may also freely replace $k$ with any finite extension,
since this does not affect the definition of $P_X$, nor the statement
of Theorem \ref{thm:fact}.

By Lemma \ref{lem:tor} below (a version of 
\cite[Lem.~1, Lem.~2]{BP02}), there exists a torus $S \subset G$ 
acting on $G/H^0$ with finite isotropy subgroup schemes, such that
the quotient $S \backslash G/H^0$ is strongly pure. Then $S$ also
acts on $X = G/H$ with finite isotropy subgroup schemes, and
the quotient 
$$
S \backslash X = (S \backslash G/H^0)/(H/H^0)
$$ 
is also strongly pure by Lemma \ref{lem:wep}(i). 

We now claim that there exists a decomposition of $X$ into finitely 
many locally closed $S$-stable subvarieties 
$$
X_i \cong (S/\Gamma_i) \times Y_i,
$$ 
where $\Gamma_i$ is a finite subgroup scheme of $S$. Indeed, by a 
theorem of Chevalley, $X$ is $G$-equivariantly isomorphic to
an orbit in the projectivization $\bP(V)$ of a finite-dimensional 
$G$-module $V$. Choosing a basis of $S$-eigenvectors in the dual module 
$V^*$ yields homogeneous coordinates on $\bP(V)$, and hence a 
decomposition of $\bP(V)$ into locally closed $S$-stable tori $S_i$ 
(where some prescribed homogenous coordinates are non-zero, and all 
others are zero). 
Clearly, $S$ acts on each $S_i$ via a homomorphism $f_i : S \to S_i$.
Denote by $\Gamma_i$ the kernel of $f_i$. Then we may identify 
$S/\Gamma_i$ with a subtorus of $S_i$, and hence there exists a 
``complementary'' subtorus $S'_i \subset S_i$ such that the 
multiplication map $(S/\Gamma_i) \times S'_i \to S_i$ is an isomorphism.
If $S_i$ meets $X$, then $\Gamma_i$ is finite, and the natural map
$(S/\Gamma_i) \times (X \cap S'_i) \to X \cap S_i$ is an isomorphism.
Thus, our claim holds for the subvarieties $X_i := X \cap S_i$ and
$Y_i := X \cap S'_i$. 

That claim yields a similar decomposition of $S \backslash X$ into the 
subvarieties $Y_i$. Note that each $S/\Gamma_i$ is a torus of dimension  
$$
r := \dim(S) = \dim(T) - \dim(H).
$$
Thus, we have for any sufficiently divisible $n$:
$$
\displaylines{
\vert X(\bF_{q^n}) \vert = \sum_i \vert X_i(\bF_{q^n}) \vert 
= \sum_i \vert (S/\Gamma_i)(\bF_{q^n}) \vert \; \vert Y_i(\bF_{q^n}) \vert
\hfill \cr\hfill
= (q^n - 1)^r \sum_i \vert Y_i(\bF_{q^n}) \vert 
= (q^n - 1)^r \; \vert (S \backslash X)(\bF_{q^n}) \vert.
}$$
In other words, $P_X(t) = (t - 1)^r P_{S \backslash X}(t)$.
By strong purity, the coefficients of the polynomial 
$P_{S \backslash  X}(t)$ are non-negative; this yields the  
desired factorization.

\begin{lemma}\label{lem:tor}
Let $G$ be a connected reductive group, and $H \subset G$ a torus. 
Choose a maximal torus $T$ of $G$ containing $H$ and denote by 
$W$ the Weyl group of $(G,T)$. 

Then, possibly after base change by a finite extension $\bF_{q^N}$,
there exist subtori $S$ of $T$ such that $T = S \; w(H)$ and 
$S\cap w(H)$ is finite for all $w \in W$. 

Any such torus $S$ acts on $G/H$ with finite isotropy subgroup
schemes, and the quotient $S \backslash G/H$ is a strongly pure,
affine variety.
\end{lemma}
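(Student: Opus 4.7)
The plan splits naturally into the existence of $S$ and the verification of the geometric properties of $S \backslash G/H$.

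For the existence, I would pass to a finite extension $\bF_{q^N}$ over which both $T$ and $H$ are split. The requirements $T = S \cdot w(H)$ and $|S \cap w(H)| < \infty$ then read, at the level of cocharacter lattices, as the direct-sum decomposition $X_*(T)_{\bQ} = X_*(S)_{\bQ} \oplus X_*(w(H))_{\bQ}$ for each $w \in W$. Imposing this simultaneously for all $w$ is a non-empty Zariski-open condition on the Grassmannian of $(\dim T - \dim H)$-dimensional $\bQ$-subspaces of $X_*(T)_{\bQ}$, the bad locus being a finite union of proper closed subvarieties. A $\bQ$-rational solution therefore exists, and saturating it in $X_*(T)$ gives the cocharacter lattice of the desired subtorus $S$.

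For finite isotropy I would argue by contradiction: if the identity component $S' := (S \cap gHg^{-1})^{0}$ had positive dimension, then $S'$ and $g^{-1}S'g$ would be $G$-conjugate subtori of $T$ (the first via $S' \subset S \subset T$, the second via $g^{-1}S'g \subset H \subset T$). The classical fact that $G$-conjugate subtori of a maximal torus are already $W$-conjugate (which follows from conjugacy of maximal tori in $Z_G(S')$) then produces $w \in W$ with $S' \subset w(H)$; combined with $S' \subset S$, this places $S'$ inside the finite group scheme $S \cap w(H)$, a contradiction. Affineness of $S \backslash G/H$ then follows from Matsushima's theorem ($G/H$ is affine, as $H$ is reductive) combined with GIT: the reductive $S$ acts with orbits all of dimension $\dim S$ and hence all closed, so $\Spec(k[G/H]^S)$ is both affine and a geometric quotient.

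The heart of the proof is strong purity of $S \backslash G/H$, and my plan is to stratify via the Bruhat decomposition $G = \coprod_{w \in W} BwB$, which is stable under the action $(s,h) \cdot g := sgh^{-1}$ of $S \times H$. Using the parametrization $BwB \cong U'_{w} \times U \times T$ with $U'_{w}$ an affine space of dimension $\ell(w)$, a direct calculation shows that $(s,h)$ sends $(u_1, u_2, t)$ to $(su_1 s^{-1},\, w^{-1}(s)\, u_2\, w^{-1}(s)^{-1},\, w^{-1}(s)\, t\, h^{-1})$. The homomorphism $S \times H \to T$, $(s,h) \mapsto w^{-1}(s) h^{-1}$, is surjective by the hypothesis $T = w^{-1}(S) \cdot H$ and has finite kernel isomorphic to $S \cap w(H)$, so slicing at $t = e_T$ identifies $(BwB)/(S \times H)$ with $(U'_{w} \times U)/(S \cap w(H))$---a finite group scheme quotient of an affine space. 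Lemma \ref{lem:fin} then computes its compactly supported cohomology as the $(S \cap w(H))$-invariants in that of the affine space, and the action on the one-dimensional top class is trivial (it is induced by variety automorphisms), so each stratum is strongly pure. Inductive application of Lemma \ref{lem:wep}(iii) along the Bruhat order then yields strong purity of $S \backslash G/H$. The main obstacle I expect is the scheme-theoretic justification of the slice isomorphism $(BwB)/(S \times H) \cong (U'_{w} \times U)/(S \cap w(H))$, together with a clean verification that the induced stabilizer action on the slice is precisely that of $S \cap w(H)$.
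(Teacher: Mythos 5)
Your proposal is correct and follows essentially the same route as the paper: a common complement to the finitely many subspaces $w(H)$ inside $T$ (read off on the (co)character lattice), followed by the Bruhat stratification $B n_w B \cong U^w \times U \times T$ with its $(S\times H)$-equivariant projection to $T$, identifying each stratum of the quotient with $(U^w\times U)/\bigl(S\cap w(H)\bigr)$ and concluding via Lemmas \ref{lem:fin} and \ref{lem:wep}. The only (harmless) deviations are cosmetic: you get finite isotropy from $W$-conjugacy of subtori of $T$ and affineness from Matsushima plus GIT, whereas the paper reads both directly off the cell description (finite isotropy forces all orbits to be closed of dimension $\dim S$, and the quotient of the affine variety $G/H$ is then affine).
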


\begin{proof}
We may assume that $T$ is split. Let $\Lambda$ be its character group;
this is a lattice equipped with an action of $W$, and containing the
lattice $\Lambda^H$ of characters of $T/H$. We may find a subgroup
$\Lambda'$ of $\Lambda$ such that $\Lambda/\Lambda'$ is a lattice, 
$\Lambda' \cap w(\Lambda^H) = \{0\}$ and $\Lambda' + w(\Lambda^H)$ has
finite index in $\Lambda$ for any $w \in W$ (indeed, the subspaces
$w(\Lambda^H)_{\bQ}$ of the rational vector space $\Lambda_{\bQ}$ have
a common complement). Then $\Lambda' = \Lambda^S$ for a subtorus $S$
of $T$ which satisfies the first assertion.

For the second assertion, we may choose a Borel subgroup $B$ of $G$
that contains $T$. As in (\ref{eqn:bru}, \ref{eqn:cel}), consider the
Bruhat decomposition $G = \bigcup_{w \in W} B n_w B$ and the
isomorphisms
$$
U^w \times U \times T \longrightarrow B n_w B, \quad 
(u,v,t) \longmapsto u n_w v t. 
$$
The resulting projection 
$$
p: B n_w B \longrightarrow T
$$
is equivariant with respect to $T \times T$ acting on $B n_w B$ via
left and right multiplication, and on $T$ via
$$
(x,y) \cdot z = w^{-1}(x) \; z \; y^{-1}.
$$
The fiber of $p$ at the identity element of $T$ is isomorphic to 
the affine space $U^w \times U$. This yields a cartesian square 
$$ 
\CD
B n_w B @>{p}>> T \\
@V{/H}VV @V{/H}VV \\
B n_w B/H @>{f}>> T/H, \\
\endCD
$$
where $f$ is $T$-equivariant. Moreover, $T/H$ is homogeneous under $S$
acting via $s \cdot t H = w^{-1}(s) t H$, and the corresponding
isotropy subgroup scheme is $S \cap w^{-1}(H)$. Thus, $B n_w B/H$ is
the quotient of $U^w \times U \times S$ by $S \cap w^{-1}(H)$ acting
linearly on $U^w \times U$, and on $S$ via multiplication. 

By our assumption on $S$, it follows that all isotropy subgroup
schemes for its action on $G/H$ are finite; in particular,
all orbits are closed. Since the variety $G/H$ is affine, the quotient 
$S \backslash G/H$ exists and is affine as well. Moreover, this
quotient is decomposed into the locally closed varieties
$$
S \backslash B n_w B/H \cong (U \times U^w)/\big(S \cap w(H)\big).
$$
Thus, $S \backslash G/H$ is strongly pure in view of Lemma 
\ref{lem:wep}(i).
\end{proof}

\section{Elementary proofs of Theorems \ref{thm:pepo}, 
\ref{thm:fact} and \ref{thm:pos}}
\label{sec:elem}

As in Section \ref{sec:fact}, we may assume that $X = G/H$, where $G$ 
is a connected reductive group and $H$ is a closed subgroup such that
$H^0$ is a torus. We first obtain a formula for the number of
$\bF_{q^n}$-rational points of $X$, by standard arguments of Galois
descent.

Denote by $\Gamma$ the finite group $H/H^0$. For any 
$\gamma \in \Gamma$, choose a representative 
$h_{\gamma} \in H(\bar{k})$. By Lang's theorem, we may choose
$g_{\gamma} \in G(\bar{k})$ such that 
$h_{\gamma} = g_{\gamma}^{-1} F(g_{\gamma})$.

Consider a point $x \in (G/H)(\bar{k})$ with representative
$g \in G(\bar{k})$. Then $x \in (G/H)(\bF_q) = (G/H)^F$ if and only if
$g^{-1} F(g) \in H(\bar{k})$, that is, 
$g^{-1} F(g) \in h_{\gamma} H^0(\bar{k})$ for a unique 
$\gamma \in \Gamma$. Equivalently, we have $g = z g_{\gamma}$,
where $z \in G(\bar{k})$ satisfies 
$$
z^{-1} F(z) \in F(g_{\gamma}) H^0(\bar{k}) F(g_{\gamma}^{-1}).
$$
Moreover,
$$
F(g_{\gamma}) H^0 F(g_{\gamma}^{-1}) = 
F(g_{\gamma} H^0 g_{\gamma}^{-1}) = 
g_{\gamma} h_{\gamma} H^0 h_{\gamma}^{-1} g_{\gamma}^{-1} = 
g_{\gamma} H^0 g_{\gamma}^{-1}.
$$ 
Thus, each $g_{\gamma} H^0 g_{\gamma}^{-1}$ is defined over $k$, and 
$$
z^{-1} F(z) \in g_{\gamma} H^0(\bar{k}) g_{\gamma}^{-1}.
$$
Applying Lang's theorem again, we see that
$$
z \in G(\bF_q) g_{\gamma} H^0(\bar{k}) g_{\gamma}^{-1}.
$$ 
Thus, the preimage of $(G/H)(\bF_q)$ in $(G/H^0)(\bar{k})$ is the 
disjoint union of the orbits $G(\bF_q) g_{\gamma} H^0$, where 
$\gamma \in \Gamma$. This yields the equality
\begin{equation}\label{eqn:co0}
\vert (G/H)(\bF_q) \vert = 
\frac{1}{\vert \Gamma \vert} \sum_{\gamma \in \Gamma} 
\frac{\vert G(\bF_q) \vert}
{\vert (g_{\gamma} H^0 g_{\gamma}^{-1})(\bF_q) \vert}.
\end{equation}
But $G(\bF_q) = G^F$ and 
$(g_{\gamma} H^0 g_{\gamma}^{-1})(\bF_q) 
= (g_{\gamma} H^0 g_{\gamma}^{-1})^F \cong (H^0)^{\gamma^{-1} F}$,
where $\gamma$ acts on $H^0$ via conjugation by $h_{\gamma}$
(this makes sense since $H^0$ is commutative). Thus, we may rewrite
(\ref{eqn:co0}) as 
\begin{equation}\label{eqn:co1}
\vert (G/H)(\bF_q) \vert = \frac{1}{\vert \Gamma \vert} 
\sum_{\gamma \in \Gamma} 
\frac{\vert G^F \vert}{\vert (H^0)^{\gamma  F} \vert}.
\end{equation}
This still holds when $q$ is replaced with $q^n$, and $F$ with $F^n$,
where $n$ is an arbitrary positive integer.

Next, we obtain a more combinatorial formula for 
$\vert (G/H)(\bF_{q^n}) \vert$. Denote by 
$$
\Lambda = \Lambda_{H^0} := \Hom(H^0,\bG_m)
$$ 
the character group of the torus $H^0$. Then $\Lambda$ is a lattice,
where $\Gamma$ acts via its action on $H^0$ by conjugation. Moreover,
$F$ defines an endomorphism of $\Lambda$ that we still denote by $F$,
via $\big(F(\lambda)\big)(x) = \lambda\big(F(x)\big)$ for all points
$\lambda$ of $\Lambda$ and $x$ of $H^0$. Since $H^0$ splits over
some finite extension $\bF_{q^N}$, we have
$$
F^N = q^N \id
$$ 
as endomorphisms of $\Lambda$. Thus, we may write $F = q F_0$,
where $F_0$ is an automorphism of finite order of the rational vector 
space $\Lambda_{\bQ}$. Then $F_0$ normalizes $\Gamma$, and 
$$
\vert (H^0)^{\gamma F} \vert = 
\vert \Lambda/(\gamma F - \id)\Lambda \vert = 
\vert {\det}_{\Lambda_{\bQ}} (\gamma F - \id) \vert = 
{\det}_{\Lambda_{\bQ}} (q \id - F_0^{-1} \gamma^{-1})
$$
by results of \cite[3.2, 3.3]{Ca85}. As a consequence,
$$
\vert (H^0)^{\gamma F} \vert =
q^{\dim(H)} \; {\det}_{\Lambda_{\bQ}} (\id - F^{-1} \gamma^{-1}).
$$
Combined with (\ref{eqn:co1}), this yields
\begin{equation}\label{eqn:co2}
\vert (G/H)(\bF_q) \vert = 
\frac{\vert G^F \vert}{q^{\dim(H)}} \;
\frac{1}{\vert \Gamma \vert} \;
\sum_{\gamma \in \Gamma}  \frac{1}
{{\det}_{\Lambda_{\bQ}} (\id - F^{-1} \gamma)}.
\end{equation}
Now consider the expansion
$$
\frac{1}{{\det}_{\Lambda_{\bQ}} (\id - F^{-1} \gamma)} = 
\sum_{n=0}^{\infty} \Tr_{S^n(\Lambda_{\bQ})}(F^{-1} \gamma),
$$
where $S^n$ denotes the $n$th symmetric power. Since all eigenvalues
of $F$ in $\Lambda_{\bQ}$ have absolute value $q$, the series in the
right-hand side converges absolutely. Thus, we may write
\begin{equation}\label{eqn:co3}
\vert (G/H)(\bF_q) \vert = 
\frac{\vert G^F \vert} {q^{\dim(H)}} \sum_{n=0}^{\infty} 
\Tr_{S^n(\Lambda_{\bQ})}
\big(F^{-1}\; \frac{1}{\vert \Gamma\vert} 
\sum_{\gamma \in \Gamma} \gamma \big).
\end{equation}
Since the operator
$\frac{1}{\vert \Gamma \vert} \sum_{\gamma \in \Gamma} \gamma$
of any $\Gamma$-module $M$ is the projection onto the subspace
$M^{\Gamma}$ of $\Gamma$-invariants, the series in the right-hand side
of (\ref{eqn:co3}) equals
$$ 
\sum_{n=0}^{\infty} \Tr_{S^n(\Lambda_{\bQ})^{\Gamma}}(F^{-1}) =
\Tr_{S^{\Gamma}}(F^{-1}),
$$
where $S$ denotes the symmetric algebra of $\Lambda_{\bQ}$, and 
$S^{\Gamma}$ the subalgebra of $\Gamma$-invariants; here $F$ acts
on $S$ by algebra automorphisms, and preserves $S^{\Gamma}$. This
yields the equality 
\begin{equation}\label{eqn:co4}
\vert (G/H)(\bF_q) \vert = \frac{ \vert G^F \vert}{q^{\dim(H)}} \;
\Tr_{S^{\Gamma}}(F^{-1}).
\end{equation}
To obtain the desired combinatorial formula, it remains to compute 
$\vert G^F \vert$. For this, choose a maximal torus $T$ of $G$
containing $H^0$ (and defined over $k$), with normalizer $N_G(T)$ and
Weyl group $W$; denote by $\Lambda_T$ the character group of $T$, and
by $R$ its symmetric algebra over $\bQ$. Applying (\ref{eqn:co4}) to
$H = N_G(T)$ yields
$$
\vert G^F \vert  = \frac{q^{\dim(G)}}{\Tr_{R^W}(F^{-1})}
$$
in view of Steinberg's theorem. Substituting in (\ref{eqn:co4}) and
replacing $F$ with $F^n$ yields our combinatorial formula
\begin{equation}\label{eqn:co5}
\vert (G/H)(\bF_{q^n}) \vert = q^{n\dim(G/H)}
\frac{ \Tr_{S^{\Gamma}}(F^{-n})}{\Tr_{R^W}(F^{-n})}.
\end{equation}
Here $F$ acts on $R^W$ and $S^{\Gamma}$ by automorphisms of graded
algebras which are diagonalizable over $\bar{\bQ}$ with eigenvalues of
the form $\zeta \, q^j$, where $\zeta$ is a root of unity and $j$ is a
non-negative integer.

We now obtain an invariant-theoretical interpretation of the
right-hand side of (\ref{eqn:co5}). The restriction map
$\Lambda_T \to \Lambda_{H^0}$ induces a surjective, $F$-equivariant
homomorphism 
$$
\rho :  R \longrightarrow S.
$$
We claim that $\rho(R^W)$ is contained in $S^{\Gamma}$. To see this,
consider $\gamma \in \Gamma$ and its representative 
$h_{\gamma} \in H(\bar{k}) \subset N_G(H^0)(\bar{k})$. 
Then $T$ and $h_{\gamma}^{-1} T h_{\gamma}$ are maximal tori of the
centralizer $C_G(H^0)$. It follows that $h_{\gamma}$ is a
$\bar{k}$-rational point of $N_G(T) C_G(H^0)$. Thus, the automorphism
of $S$ induced by $\gamma$ lifts to an automorphism of $R$ induced by
some $w \in W$; this implies our claim.

By that claim, $S^{\Gamma}$ is a graded $R^W$-module; that module is
finitely generated, since the $R^W$-module $R$ is finitely generated. 
Moreover, $R^W$ is a graded polynomial algebra over $\bQ$, and hence 
$S^{\Gamma}$ admits a finite free resolution
$$
0 \to R^W \otimes E_m {\overset{\varphi_m}\longrightarrow}  
R^W \otimes E_{m-1} {\overset{\varphi_{m-1}}\longrightarrow}
\cdots {\overset{\varphi_1}\longrightarrow}
R^W \otimes E_0 \to S^{\Gamma} \to 0,
$$
where $E_0,\ldots,E_m$ are finite-dimensional vector spaces equipped
with an action of $F$, and $\varphi_1, \ldots, \varphi_m$ are 
$F$-equivariant homomorphisms of $R^W$-modules. Thus,  
$$
\Tr_{S^{\Gamma}}(F^{-n}) = \Tr_{R^W}(F^{-n}) \;
\sum_{i=0}^m (-1)^i \; \Tr_{E_i}(F^{-n})
$$
for all integers $n$. Together with (\ref{eqn:co5}), this yields
\begin{equation}\label{eqn:co6}
\vert (G/H)(\bF_{q^n}) \vert = q^{n \dim(G/H)} \sum_{i=0}^m
(-1)^i \; \Tr_{E_i} (F^{-n})
\end{equation}
for any positive integer $n$.

We may further assume that our free resolution is minimal,
i.e., each $\varphi_i$ maps bijectively a basis of $E_i$ to a 
minimal set of generators of the $R^W$-module 
$\Ima(\varphi_i) = \Ker(\varphi_{i-1})$ (see \cite[Lem.~19.4]{Ei95}). 
In particular, $E_0 \cong \bQ$, and each 
$E_i$ is $F$-equivariantly isomorphic to a subspace
of $R^W \otimes E_{i-1}$. Since the action of $F$ on $R^W$
is diagonalizable over $\bar{\bQ}$ with eigenvalues of the form 
$\zeta \, q^j$ as above, it follows by induction on $i$ that the same 
holds for the action of $F$ on $E_i$, and hence on $R^W \otimes E_i$. 
Together with (\ref{eqn:co6}), this yields that
$\vert (G/H)(\bF_{q^n}) \vert$ is a linear combination of powers 
$\zeta^{-n} \, q^{n(\dim(G/H) - j)}$ with integer coefficients. 
In other words, $\vert (G/H)(\bF_{q^n}) \vert$ is a periodic 
Laurent polynomial function of $q^n$ with algebraic integer 
coefficients. As noted at the beginning of Section \ref{sec:pepo}, 
this implies Theorem \ref{thm:pepo}.

We now adapt these arguments to prove Theorem \ref{thm:fact}.
Again, we may replace $\bF_q$ with $\bF_{q^N}$ and assume that the
torus $T$ is split, i.e., $F$ acts on $(\Lambda_T)_{\bQ}$ as 
$q \id$. Then the actions of $F$ on the algebras $R, R^W, S, \ldots$
are determined by their gradings, and 
$\Tr_R(F^{-n}), \Tr_{R^W}(F^{-n}), \Tr_S(F^{-n}), \ldots$ 
are obtained from the Hilbert series 
$h_R(t), h_{R^W}(t), h_S(t), \ldots$ 
of the corresponding graded algebras by putting $t = q^{-n}$. 

Consider the graded algebra    
$$
A := R \otimes_{R^W} S^{\Gamma}.
$$
Since $R$ is a free $R^W$-module of finite rank, the 
$S^{\Gamma}$-module $A$ is also free, of finite rank; moreover,
\begin{equation}\label{eqn:hil}
h_A(t) = \frac{ h_R(t) \; h_{S^{\Gamma}}(t)}{h_{R^W}(t)} =
\frac{h_{S^{\Gamma}}(t)}{(1-t)^{\dim(T)} \; h_{R^W}(t)}.
\end{equation}
The algebra of invariants $S^{\Gamma}$ is Cohen-Macaulay of Krull
dimension $\dim(H)$ (see e.g. \cite[Exercise 18.14]{Ei95}), 
and hence so is $A$. But $A$ is a finitely generated module over 
the polynomial algebra $R$. By Noether normalization, 
it follows that $A$ is a finitely generated module over a polynomial
subalgebra $R'\subset R$ generated by elements $z_1,\ldots,z_{\dim(H)}$
of degree $1$. In particular, $z_1,\ldots,z_{\dim(H)}$ generate an ideal 
of $A$ of finite codimension; therefore, they form a regular sequence
in $A$. It follows that $A$ is a free module of finite rank over $R'$
(see e.g. \cite[Cor.~18.17]{Ei95});
thus, the Hilbert series of $A$ satisfies
$$
h_A(t) = \frac{Q(t)}{(1-t)^{\dim(H)}},
$$ 
where $Q(t)$ is a polynomial with non-negative integer
coefficients. Combined with (\ref{eqn:hil}), this yields the equality
$$
\frac{h_{S^{\Gamma}}(t)}{h_{R^W}(t)} = (1-t)^r \; Q(t),
$$
where $r = \dim(T) - \dim(H)$. Evaluating at $q^{-n}$ and using 
(\ref{eqn:co5}) yields the desired factorization,
$$
\vert (G/H)(\bF_{q^n}) \vert = 
(q^n -1)^r \; q^{n(\dim(G/H)-r)} \; Q(q^{-n})
$$
(note that the function $(t-1)^r \; t^{\dim(G/H)-r} \; Q(t^{-1})$ 
is polynomial by Theorem \ref{thm:pepo}, and hence so is the function
$t^{\dim(G/H)-r} \; Q(t^{-1})$ ).

Finally, we prove Theorem \ref{thm:pos}. By (\ref{eqn:co0}), we may 
assume that $H$ is connected, and hence that
$$
\vert (G/H)(\bF_{q^n}) \vert = 
\frac{\vert G(\bF_{q^n}) \vert }{ \vert H(\bF_{q^n}) \vert }.
$$
By \cite{St68} (see also \cite[2.9]{Ca85}), the numerator and
denominator of the right-hand side are products of terms 
$q^{nd} - \zeta^n$, where $d$ is a positive integer, 
and $\zeta$ is either $0$ or a root of unity. It follows that the 
non-zero roots of the polynomials
$P_0(t), \ldots, P_{N-1}(t)$ of (\ref{eqn:pepo}) are roots of
unity. Since these polynomials have real coefficients, their roots are
either $0$, $1$, $-1$, or come by pairs of complex conjugates $\zeta$,
$\bar{\zeta}$, where $\zeta$ is a root of unity. Moreover, each
polynomial $(t + 1 - \zeta)(t + 1 -\bar{\zeta})$ has non-negative real 
coefficients; thus, the same holds for each $P_r(t)$.

\begin{remarks}
(i) The final step of the preceding proof does not extend to
all homogeneous varieties under linear algebraic groups, as
the polynomials $P_0(t), \ldots, P_N(t)$ may have roots of
absolute value $>1$. 

For example, let $\Gamma$ be a group of order $2$, and $H_r$ the 
semi-direct product of the split $r$-dimensional torus $\bG_m^r$ 
with $\Gamma$, where the non-trivial element $\gamma  \in \Gamma$ 
acts on $\bG_m^r$ via 
$(t_1, \ldots, t_r) \mapsto (t_1^{-1}, \ldots, t_r^{-1})$.
Then $H_r$ is isomorphic to a closed 
$F$-stable subgroup of $\GL(2r)$, where $F$ is the usual Frobenius 
endomorphism of $\GL(2r)$ that raises all matrix coefficients to 
the power $q$: denoting by $(e_1,\ldots,e_r,e_{-r}, \ldots, e_{-1})$ 
the standard basis of $k^{2r}$, we let 
$t = (t_1,\ldots,t_r) \in \bG_m^r = H_r^0$ act via
$t \cdot e_{\pm i} = t_i^{\pm 1} e_{\pm i}$ for $i = 1, \ldots,r$, 
and $\gamma$ act by exchanging $e_{\pm i}$ with $e_{\mp i}$.  
With the notation of the preceding proof, we have 
$$
\vert \GL(2r)^F \vert = q^{r(2r-1)} 
\prod_{i=1}^{2r} (q^i -1)
$$
and 
$$
\frac{1}{q^{\dim(H_r)}} \Tr_{S^{\Gamma}}(F^{-1}) 
= \frac{1}{2q^r}
\big( \frac{1}{(1 - q^{-1})^r} + \frac{1}{(1 + q^{-1})^r} \big)
= \frac{Q_r(q)}{(q^2 - 1)^r},
$$ 
where 
$$
Q_r(t) := \frac{1}{2}\big( (q + 1)^r + (q - 1)^r \big) =
\sum_{i, \, 0 \leq 2i \leq r} {r \choose 2 i} t^{r -2i}.
$$
By (\ref{eqn:co4}), it follows that the homogeneous variety 
$X_r := \GL(2r)/H_r$ satisfies
$$
\vert X_r(\bF_q) \vert = Q_r(q) \, q^{r(2r - 1)} \,
\prod_{i=1}^r (q^{2i -1} - 1) \,
\prod_{j=1}^r \frac{q^{2j} - 1}{q^2 - 1} .
$$
In particular, $\vert X_r(\bF_q) \vert$ is a polynomial in $q$,
and the maximal absolute value of its roots tends to infinity
as $r \to \infty$. 

\medskip

\noindent
(ii) By the reduction steps in Section \ref{sec:fact} and the equation
(\ref{eqn:co2}), we may take for the period $N$ the least common 
multiple of the orders of all the Frobenius endomorphisms of tori 
with dimension $\leq \rk(G)$. As a consequence, we may take 
$$
N = \lcm\big(n, \varphi(n) \leq \rk(G)\big),
$$
where $\varphi$ denotes the Euler function.   

\end{remarks}

\end{document}